\documentclass[11pt,a4paper,reqno]{amsproc}
\usepackage{cite}
\usepackage[pdftex,bookmarks=true]{hyperref}
\newtheorem{theorem}{Theorem}[section]

\usepackage{mathrsfs}
\theoremstyle{definition}
\newtheorem{definition}[theorem]{Definition}

\theoremstyle{remark}

\numberwithin{equation}{section}
\usepackage{graphicx}
\graphicspath{ {images/} }
\newcommand{\xdownarrow}[1]{%
  {\left\downarrow\vbox to #1{}\right.\kern-\nulldelimiterspace}
}
\newcommand{\xuparrow}[1]{%
  {\left\uparrow\vbox to #1{}\right.\kern-\nulldelimiterspace}
}

\begin{document}
\title[ Mehler-Fock transform]{Reverse Translation and Convolution Estimates in the Framework of the Mehler-Fock Transform}

\author[U. K. Mandal, A. Prasad and S. Ranjan ] {U. K. Mandal$^{\dagger}$, Akhilesh Prasad$^{*}$ and  Sudhanshu Ranjan$^{\dagger \dagger }$ }


\subjclass[2010]{44A20; 44A15; 46E35}
\date{}

\keywords{Associated Legendre function; Mehler-Fock transform; Convolution, inequalities, Special functions}

\maketitle

$^\dagger$ Department of Mathematics, Nalanda College, Biharsharif, Patliputra University, Patna, India\\
$^{ *, \dagger \dagger}$ Department of  Mathematics and Computing, Indian Institute of Technology (Indian School of Mines), Dhanbad, India

E-mail: {{upainmandal$@$gmail.com$^{\dagger}$ (U. K. Mandal) ;~ apr$\_$bhu$@$yahoo.com$^{*}$ (Corresponding Author: Akhilesh Prasad ); ~ sudhanshum$1997@$gmail.com$^{ \dagger \dagger}$ (Sudhanshu Ranjan)}

\begin{abstract}
In this paper, reverse translation and  convolution estimates in the frame of $\mu^{th}$ order Mehler-Fock transform are obtained. Continuity properties of these operators on the test function space $\mathfrak{H}_{l,r,n}$ have been discussed. Moreover, Sobolev space $S_{r,p}^{N,\alpha}$ has been introduced and again continuity properties of these operators in this Sobolev space have been discussed.
\end{abstract}
\section{Introduction}
The zero order Mehler-Fock transform (MFT) is an integral transform that is generated by  a special function namely Legendre function $P_{-\frac{1}{2}+i\eta}(u),$ where $ ~\eta\in(0,\infty),~u\in (1,\infty)$. The MFT emerged as the spectral transform involving a Sturm-Liouville problem whose eigenfunctions are Legendre  functions. It was first introduced by Mehler \cite{mehler}, and later Fock \cite{fock}, studied about such transformation in a systematic way. Following that, several studies have been carried out in both applied and theoretical areas concerning the continuity properties of such transforms and their associated operators in different function spaces; for instance, see \cite{PS, Bn1, Yaku2,Sai,TP}.

The investigation associated to MFT extended by invoking  the associated Legendre function $P^{-\mu}_{-\frac{1}{2}+i\eta}(u),$ as its kernel, where the order $\mu$  may be real or complex, later certain conditions were deployed over the order as per restrictions over the transformation. This extension  opened a space of investigation in the wider class of harmonic analysis. Various theories have been developed and corresponding applications have been done in this context, see \cite{gen1,gen3,gen4,gen5,gen7,gen8,gonz1, Ha1}.

The $\mu^{th}$ order Mehler-Fock transform ($\mu$MFT) of any integrable function $\varphi$  is defined \cite{ gonz1,Ha1}as follows:
\begin{eqnarray}
	\big(\mathfrak{MF_\mu}\varphi\big)(\eta)=\int_1^\infty \varphi(u) P_{-\frac{1}{2}+i\eta}^{-\mu}(u)  du,\quad \eta\in \mathbb{R}_+=(0,\infty), \label{1.1}
\end{eqnarray}  
The inversion of $\mu$MFT defined in \eqref{1.1} is given by 
 \begin{eqnarray}
	\varphi(u)= \frac{1}{\pi}\int_0^\infty\eta \sinh(\pi\eta)\Phi(\eta,\mu)P_{-\frac{1}{2}+i\eta}^{-\mu}(u)\big(\mathfrak{MF_\mu}\varphi\big)(\eta)d\eta, \label{1.2}
\end{eqnarray}
with $\Phi(\eta,\mu)=\Gamma(\mu+\frac{1}{2}+i\eta)\Gamma(\mu+\frac{1}{2}-i\eta)$ and $u\in \mathbb{I}=(1,\infty)$, where $P_{\frac{-1}{2}+i\eta}^{-\mu}$ is of order $\mu\in \mathbb{C}$ and $~\mathfrak{R}\mu>\frac{-1}{2}$.
The expression for $P_{-\frac{1}{2}+i\eta}^{-\mu}(u)$ is formulated as 
\begin{eqnarray*}
	P_{-\frac{1}{2}+i\eta}^{-\mu}(u) =\frac{1}{2^{\mu}\Gamma(\mu+1)}(u^2-1)^\frac{\mu}{2} {_{2}F_1}\bigg(\mu+\frac{1}{2}+i\eta, \mu+\frac{1}{2}-i\eta ;\mu+1; \frac{1-u}{2}\bigg).
\end{eqnarray*} 
From \cite{gonz1}, we have
\begin{eqnarray}
	|P_{-\frac{1}{2}+i\eta}^{-\mu}(u)|\leq \frac{\Gamma(\frac{1}{2}+\mathfrak{R \mu})}{|\Gamma(\frac{1}{2}+\mu)|}P^{-\mathfrak{R}\mu}_{-\frac{1}{2}}(u),\label{1.3}
\end{eqnarray}
\begin{eqnarray}
	P^{-\mathfrak{R}\mu}_{-\frac{1}{2}}(u)\sim \frac{(u-1)^{\frac{\mu}{2}}}{2^{\frac{\mu}{2}}\Gamma(\mu+1)}, \quad u\rightarrow 1^+, \label{1.4}
\end{eqnarray}
\begin{eqnarray}
	P^{-\mathfrak{R}\mu}_{-\frac{1}{2}}(u)\sim \frac{1}{\Gamma(\mu+\frac{1}{2})}\Big(\frac{2}{\pi u}\Big)^{\frac{1}{2}}\ln u,\quad u\rightarrow \infty.\label{1.5}
\end{eqnarray}
According to \cite{Ha1,Glaeske}, the differential operator  $A_u$ is given by 
\begin{eqnarray}
	A_u&=&(u^2-1)^\frac{-\mu}{2} D_u(u^2-1)^{\mu+1}D_u(u^2-1)^\frac{-\mu}{2},\quad D_u=\frac{d}{du},\label{1.6}
	\\\nonumber
	&=&(u^2-1)\frac{d^2}{du^2}+2u\frac{d}{du}- \biggl\{\mu(\mu+1)+\frac{\mu^2}{u^2-1}\biggr\},
\end{eqnarray}
For $k\in \mathbb{N}_0$, from \cite{gonz1, Glaeske}, we have
\begin{eqnarray}
	A_u^kP_{-\frac{1}{2}+i\eta}^{-\mu}(u)=(-1)^k\biggl[\Big(\mu+\frac{1}{2}\Big)^2+\eta^2 \biggr]^k P_{-\frac{1}{2}+i\eta}^{-\mu}(u).\label{1.7}
\end{eqnarray}
 The series representation of $A_u$ for $k\in \mathbb{N}_0$ is given by \cite{Ha1,Glaeske}
 \begin{eqnarray}
 	A^k_uf(u)=\sum^{2k}_{j=0}(u^2-1)^{j-k} p_{j,k}(u)D_u^j f(u),\label{1.8}
 \end{eqnarray}
 where polynomials $p_{j,k}(u)$ are  of degree $2k-j$, specifically  $p_{2k,k}(u)=1$ and $p_{2k-1,k}(u)=2k^2u$. Another form can also be defined as follows:
\begin{eqnarray}
	A^k_uf(u)=(u^2-1)^\frac{\mu}{2}\sum^{2k}_{j=1}q_{j,k}(u)D_u^j[(u^2-1)^\frac{-\mu}{2}f(u)],\label{1.9}
\end{eqnarray}
where each $q_{j,k}(u)$ are polynomials of degree $j$ and does not depend on $f$.\\
The associated Legendre function $P_{-\frac{1}{2}+i\eta}^{-\mu}(\cdot)$ has the following product formula \cite[p. 259]{Ha1} 
\begin{eqnarray*}
	P_{-\frac{1}{2}+i\eta}^{-\mu}(u)P_{-\frac{1}{2}+i\eta}^{-\mu}(v)=\int_1^\infty K(u,v,w)P_{-\frac{1}{2}+i\eta}^{-\mu}(w) dw,\label{1.10}
\end{eqnarray*}
which on taking inverse gives the integral representation of the symmetric kernel $K(u,v,w)$ as 
\begin{eqnarray}
K(u,v,w)=  \frac{1}{\pi}\int_0^\infty\eta \sinh(\pi\eta)\Phi(\eta,\mu)P_{-\frac{1}{2}+i\eta}^{-\mu}(u)P_{-\frac{1}{2}+i\eta}^{-\mu}(v)P_{-\frac{1}{2}+i\eta}^{-\mu}(w)d\eta, \quad\label{1.10}
\end{eqnarray}
with the symmetric kernel 
\begin{eqnarray}
\nonumber
	K(u,v,w) = \begin{cases}
		\frac{2^{\mu}\Gamma(\mu+1)}{\pi\Gamma(2\mu+1)}Y(u,v,w)^{\mu-\frac{1}{2}}\\
        ~~~\times[(u^2-1)(v^2-1)(w^2-1)]^{-\frac{\mu}{2}}, &\quad \text{for}\quad Y>0
		, \\ 
		0 &\quad \text{for}\quad Y\leq0,
	\end{cases}	
	\\
	\label{1.11}
\end{eqnarray}
where $Y(u,v,w)=2uvw+1-u^2-v^2-w^2$.
\\
Followed to this result the generalized translation operator $_\mu\mathfrak{T}_u$ for $f\in L^1(\mathbb{I};~ du)$ is given by \cite{Glaeske, Ha1}
\begin{eqnarray}
	(_\mu\mathfrak{T}_u f)(v)=(_\mu\mathfrak{T}_v f)(u)=\int^\infty_1 K(u,v,w)f(w)dw.\label{1.12}
\end{eqnarray}
For $f, g \in L^1(\mathbb{I}; du)$, the convolution operator is given as
\begin{eqnarray}
	(f\underset{\mu}{*}g)(u)&=&\int^\infty_1 g(v)(_\mu\mathfrak{T}_u f)(v)dv\nonumber
	\\
	&=&\int^\infty_1\int^\infty_1 f(w)g(v) K(u,v,w)dvdw.\label{1.13}
\end{eqnarray}

The paper is organized as follows. Section 1 provides an introduction to the $\mu^{th}$ order Mehler-Fock transform and presents the necessary preliminary results, including the definitions, asymptotic relations, and other basic results on which the paper is based. Section 2 deals with reverse translation and convolution estimates obtained by means of the reverse H\"older's inequality. Section 3 is devoted to the study of continuity properties of these operators, along with the adjoint $\mu$MFT. Finally, Section 4 is concerned with the continuity properties of convolution and translation operators on the Sobolev type space.

\section{The reverse translation and convolution estimates}
Forward convolution estimates in the framework of various integral transforms have been developed in classical harmonic analysis using Schwartz's and H\"older's inequalities to study the boundedness properties of the associated operators. In contrast, reverse estimates are of particular importance in the study of inverse problems and uniqueness questions for partial differential equations, etc.; see \cite{reverse1,reverse5,application1,application2,application3,application4}.  To obtain the reverse convolution estimate, the reverse H\"older's inequality plays a key role \cite{holder}. However, such estimates for the $\mu^{\text{th}}$ order Mehler–Fock transform are still lacking. This gap provides the motivation for the present section.\\ 
Using \eqref{1.11}, first we obtain a lower bound of the function $K(u,v,w)$.
\\
For $u,~v,~w \in \mathbb{I}$, we can write
\begin{eqnarray*}
(u^2-1)(v^2-1)(w^2-1)\leq u^2v^2w^2.
\end{eqnarray*}
Which implies that, for $\mathfrak{R}\mu>\frac{-1}{2}$ 
\begin{eqnarray}
|(u^2-1)(v^2-1)(w^2-1)|^{\frac{-\mathfrak{R}\mu}{2}}\geq (uvw)^{-\mathfrak{R}\mu}.\label{2.1}
\end{eqnarray}
Now in view of \eqref{1.11} and \eqref{2.1}, we can write that the lower bound for $K(u,v,w)$ is
\begin{eqnarray}
|K(u,v,w)|\geq C_{\mu,\epsilon} (uvw)^{-\mathfrak{R}\mu},\label{2.2}
\end{eqnarray}
where $\epsilon>0$ be any fix number for which $Y\geq \epsilon>0$ and $C_{\mu,\epsilon} >0$ be any constant. 
\\
To obtain the reverse convolution inequality, we will use the above estimate as well as the reverse H\"older's inequality stated as
 \cite{holder}:
\begin{theorem}
If $F_1>0$ and $F_2>0$ two functions which satisfies the condition $0<c\leq \frac{F_1}{F_2}\leq  C<\infty$ on the non-empty set X, then
\begin{eqnarray}
\left(\int\limits_X F_1~ d\sigma  \right)^{\frac{1}{p}}\left(\int\limits_X F_2~ d\sigma  \right)^{\frac{1}{q}}\leq M_{p,q} \left(\frac{c}{C}\right) \int\limits_X F_1^\frac{1}{p}F_2^\frac{1}{q}~ d\sigma,\label{2.3}
\end{eqnarray}
if involved integrals exist, where $M_{p,q}(s)=\frac{s^{-\frac{1}{pq}}(1-s)}{p^{\frac{1}{p}}(1-s^{\frac{1}{p}})^\frac{1}{p} q^{\frac{1}{q}}(1-s^{\frac{1}{q}})^{\frac{1}{q}}}$,~$p, q>1$ and $\frac{1}{p}+\frac{1}{q}=1$.
\end{theorem}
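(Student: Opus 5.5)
The plan is to reduce the statement to a one-variable extremal problem. The tools are a change of measure, a chord (concavity) bound, and homogeneity in the ratio $s=c/C$.

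\textbf{Reduction.} Put $t=F_1/F_2$, so that $c\le t\le C$ on $X$, and let $d\nu=F_2\,d\sigma$. Then
\[
\int_X F_1\,d\sigma=\int_X t\,d\nu,\qquad \int_X F_2\,d\sigma=\nu(X),\qquad \int_X F_1^{1/p}F_2^{1/q}\,d\sigma=\int_X t^{1/p}\,d\nu .
\]
Both sides of \eqref{2.3} are homogeneous of degree one in $\nu$, because $1/p+1/q=1$. So we may assume $\nu(X)=1$, with $\nu$ a probability measure. The claim then becomes
\[
\Bigl(\int_X t\,d\nu\Bigr)^{1/p}\le M_{p,q}(c/C)\int_X t^{1/p}\,d\nu .
\]
Replacing $t$ by $t/C$ scales both sides by $C^{1/p}$. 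Hence we may also normalize $C=1$ and $c=s\in(0,1)$.

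\textbf{Chord bound.} The map $x\mapsto x^{1/p}$ is concave, so on $[s,1]$ it lies above its chord $\ell(x)=\alpha+\beta x$. Here
\[
\beta=\frac{1-s^{1/p}}{1-s},\qquad \alpha=\frac{s^{1/p}-s}{1-s}>0 .
\]
Applying this pointwise and integrating against the probability measure $\nu$ gives $\int t^{1/p}\,d\nu\ge \alpha+\beta m$, where $m=\int t\,d\nu\in[s,1]$. It therefore suffices to show
\[
\sup_{x\in[s,1]}\frac{x^{1/p}}{\alpha+\beta x}\le M_{p,q}(s).
\]

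\textbf{Maximization (main obstacle).} Differentiating $g(x)=x^{1/p}/(\alpha+\beta x)$ gives a unique critical point. It satisfies $\alpha+\beta x=p\beta x$, i.e. $x^*=\alpha/((p-1)\beta)=\frac{q}{p}\cdot\frac{\alpha}{\beta}\cdot\frac{p}{q(p-1)}$, which simplifies using $p-1=p/q$. This gives
\[
g(x^*)=\frac{(x^*)^{1/p}}{p\beta x^*}=\frac{1}{p\beta}(x^*)^{-1/q}.
\]
The hardest part is the bookkeeping that turns this into exactly the stated $M_{p,q}(s)$, with the factor $s^{-1/(pq)}$ and the powers $(1-s^{1/p})^{1/p}(1-s^{1/q})^{1/q}$. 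I expect this to follow from the identity $s^{1/p}-s=s^{1/p}(1-s^{1/q})$, which gives $\alpha/\beta=s^{1/p}(1-s^{1/q})/(1-s^{1/p})$. Substituting and collecting powers of $p$ and $q$ should reproduce the formula.

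One must also check that $x^*\in[s,1]$; otherwise the supremum is attained at an endpoint and is at most $1$, and $M_{p,q}(s)\ge 1$. In fact $x^*$ does lie in $[s,1]$, because $g(s)=g(1)=1$ and $g$ is unimodal. Finally, undoing the normalizations $C=1$ and $\nu(X)=1$ yields \eqref{2.3}.
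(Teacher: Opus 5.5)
The paper gives no proof of this statement. It imports the reverse H\"older inequality from \cite{holder} and uses it as a black box in the later reverse estimates. Your argument is therefore an independent, self-contained proof, and it is correct. It is the standard chord (Kantorovich-type) argument: the change of measure $d\nu=F_2\,d\sigma$, the normalizations $\nu(X)=1$ and $C=1$, and the bound $\int t^{1/p}\,d\nu\ge\alpha+\beta m$ are all valid.

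The bookkeeping you left as an expectation does close. Since $p-1=p/q$, the critical point is
\[
x^*=\frac{q}{p}\,\frac{\alpha}{\beta}=\frac{q}{p}\,\frac{s^{1/p}(1-s^{1/q})}{1-s^{1/p}}.
\]
Your displayed chain for $x^*$ is a tautology and can be dropped. Then
\[
g(x^*)=\frac{1-s}{p\,(1-s^{1/p})}\Bigl(\frac{p}{q}\Bigr)^{1/q}s^{-\frac{1}{pq}}\Bigl(\frac{1-s^{1/p}}{1-s^{1/q}}\Bigr)^{1/q}=\frac{s^{-\frac{1}{pq}}(1-s)}{p^{1/p}(1-s^{1/p})^{1/p}\,q^{1/q}(1-s^{1/q})^{1/q}}=M_{p,q}(s),
\]
using $p^{1/q-1}=p^{-1/p}$ and $(1-s^{1/p})^{1/q-1}=(1-s^{1/p})^{-1/p}$.

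You do not need $x^*\in[s,1]$ for the inequality. The sign of $g'(x)$ is that of $\alpha/p-\beta x/q$, so $\sup_{[s,1]}g\le\sup_{(0,\infty)}g=g(x^*)$. The fact that $x^*\in(s,1)$, which your observation $g(s)=g(1)=1$ correctly gives, matters only for sharpness. Your proof does give sharpness: if $F_1/F_2$ takes only the values $c$ and $C$, in proportions making the normalized mean equal to $x^*$, the chord bound is an equality and $M_{p,q}(c/C)$ is attained.

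Two tacit hypotheses should be stated. First, $c<C$ is needed: at $s=1$ the formula for $M_{p,q}$ is $0/0$, though its limit is $1$ and the inequality is then an equality. Second, your normalization needs $0<\int_X F_2\,d\sigma<\infty$, which is the natural reading of ``if the involved integrals exist''.

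Compared with the paper's citation, your route costs half a page. In return the section becomes self-contained and the optimality of the constant is visible.
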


 \begin{theorem}
 Let $f\in L^p(\mathbb{I};~w^{-\mathfrak{R}\mu}dw)$ be a function which satisfies the condition $0<c^\frac{1}{p}\leq f\leq C^{\frac{1}{p}}<\infty$, where $c$ and $C$ are constants  then for $1<p<\infty$, we have
 \begin{eqnarray}
 \|_\mu\mathfrak{T}_uf\|_{L^1(\mathbb{I};~v^{-\mathfrak{R}\mu}dv)} \geq C^{\prime\prime}_{\mu,\epsilon}\Big[M_{p,q}\Big(\frac{c}{C}\Big)\Big]^{-1}\|f\|_{L^p(\mathbb{I};~w^{-\mathfrak{R}\mu}dw)},\label{2.4}
\end{eqnarray}  
where $C^{\prime\prime}_{\mu,\epsilon}>0$ is constant and $\mathfrak{R}\mu>1$. 
 \end{theorem}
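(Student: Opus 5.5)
The plan is to start from the definition \eqref{1.12} of the translation and the lower bound \eqref{2.2} on the kernel, then apply the reverse H\"older inequality \eqref{2.3} to the inner $w$-integral. Since $f>0$ and $K\ge 0$, we have $|(_\mu\mathfrak{T}_uf)(v)|=\int_1^\infty K(u,v,w)f(w)\,dw$. Hence
\begin{eqnarray*}
\|_\mu\mathfrak{T}_uf\|_{L^1(\mathbb{I};~v^{-\mathfrak{R}\mu}dv)}=\int_1^\infty\int_1^\infty K(u,v,w)f(w)\,dw\,v^{-\mathfrak{R}\mu}dv.
\end{eqnarray*}
Inserting \eqref{2.2} (on the region where $Y\ge\epsilon$, which is where the kernel is bounded below; I would restrict the integrals to that region, or absorb the restriction into the constant) gives the lower bound
\begin{eqnarray*}
C_{\mu,\epsilon}\,u^{-\mathfrak{R}\mu}\int_1^\infty v^{-2\mathfrak{R}\mu}dv\int_1^\infty f(w)\,w^{-\mathfrak{R}\mu}dw.
\end{eqnarray*}
For $\mathfrak{R}\mu>1$ the $v$-integral converges to $(2\mathfrak{R}\mu-1)^{-1}$. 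This is where the hypothesis $\mathfrak{R}\mu>1$ enters; it also ensures that $w^{-\mathfrak{R}\mu}$ is integrable on $\mathbb{I}$, which is used next. The $u$-dependence $u^{-\mathfrak{R}\mu}$ is absorbed into $C''_{\mu,\epsilon}$ for fixed $u$.

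Next I apply \eqref{2.3} on $X=\mathbb{I}$ with $d\sigma=w^{-\mathfrak{R}\mu}dw$, $F_1=f^p$ and $F_2\equiv 1$. Then $F_1^{1/p}F_2^{1/q}=f$, and the hypothesis $c^{1/p}\le f\le C^{1/p}$ gives exactly $c\le F_1/F_2\le C$. The theorem yields
\begin{eqnarray*}
\Big(\int_1^\infty f^p w^{-\mathfrak{R}\mu}dw\Big)^{1/p}\Big(\int_1^\infty w^{-\mathfrak{R}\mu}dw\Big)^{1/q}\le M_{p,q}\Big(\frac{c}{C}\Big)\int_1^\infty f(w)\,w^{-\mathfrak{R}\mu}dw.
\end{eqnarray*}
Since $\int_1^\infty w^{-\mathfrak{R}\mu}dw=(\mathfrak{R}\mu-1)^{-1}$ is finite, this gives
\begin{eqnarray*}
\int_1^\infty f\,w^{-\mathfrak{R}\mu}dw\ge (\mathfrak{R}\mu-1)^{-1/q}\big[M_{p,q}(c/C)\big]^{-1}\|f\|_{L^p(\mathbb{I};~w^{-\mathfrak{R}\mu}dw)}.
\end{eqnarray*}
Combining the two displays, with
\begin{eqnarray*}
C''_{\mu,\epsilon}=C_{\mu,\epsilon}\,u^{-\mathfrak{R}\mu}(2\mathfrak{R}\mu-1)^{-1}(\mathfrak{R}\mu-1)^{-1/q},
\end{eqnarray*}
yields \eqref{2.4}.

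The main obstacle is the legitimacy of \eqref{2.2}. The kernel vanishes where $Y\le 0$, so a pointwise lower bound only holds on the set where $Y\ge\epsilon$. To make the argument honest, I would restrict both integrals to the set $\{(v,w):Y(u,v,w)\ge\epsilon\}$ and run the same computation there. The reverse H\"older step then applies to the $w$-section for each $v$, with $d\sigma$ restricted accordingly, and the resulting constants are folded into $C''_{\mu,\epsilon}$. The rest of the argument is routine once the paper's convention \eqref{2.2} is accepted as stated.
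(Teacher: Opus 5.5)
\textbf{The gap.} The reverse H\"older step in your proposal is fine. The gap is your first display, and it is exactly the step the paper's own proof rests on: using \eqref{2.2} on all of $\mathbb{I}\times\mathbb{I}$. The paper applies \eqref{2.3} with $F_1=f^pK$, $F_2=K$, $d\sigma=dw$, and only then bounds $K$ from below. You bound $K$ first and apply \eqref{2.3} with $F_1=f^p$, $F_2\equiv1$, $d\sigma=w^{-\mathfrak{R}\mu}dw$. So your argument is the paper's with the two steps interchanged. You also handle the $v$-integration and the factor $u^{-\mathfrak{R}\mu}$ more carefully than the paper, which switches to an $L^p$ norm and asserts $u^{-\mathfrak{R}\mu}>1$.

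But $Y(u,v,w)=(u^2-1)(v^2-1)-(w-uv)^2$. Hence $K(u,v,\cdot)$ vanishes outside the bounded interval $|w-uv|<\sqrt{(u^2-1)(v^2-1)}$ and tends to $0$ at its ends, so no global bound $K\geq C(uvw)^{-\mathfrak{R}\mu}$ holds. Concretely, put $h(w):=\int_1^\infty K(u,v,w)v^{-\mathfrak{R}\mu}dv$. Your display amounts to $h(w)\geq C_{\mu,\epsilon}u^{-\mathfrak{R}\mu}(2\mathfrak{R}\mu-1)^{-1}w^{-\mathfrak{R}\mu}$. However, \eqref{1.11} and the symmetry of $K$ give $h(w)=O\big((w-1)^{\mathfrak{R}\mu/2}\big)$ as $w\to1^+$.

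Your repair does not close this. On $\{Y\geq\epsilon\}$ the domain is not a product. Reverse H\"older applied there, either on $w$-sections or after integrating out $v$ (which produces a weight vanishing for $w$ near $1$), controls only a restricted, reweighted $L^p$ norm of $f$. Comparing that with $\|f\|_{L^p(\mathbb{I};~w^{-\mathfrak{R}\mu}dw)}$ is possible only via $c\leq f^p\leq C$, at the cost of a factor $(c/C)^{1/p}$. That factor is not a constant depending on $\mu,\epsilon,u,p$ alone.

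\textbf{No repair exists.} Read \eqref{2.4} with $C''_{\mu,\epsilon}$ independent of $c$ and $C$, the only reading in which the factor $M_{p,q}(c/C)^{-1}$ has content. In that reading it is false.
\begin{itemize}
\item Take $\mu>1$ real and $u$ fixed. For $f>0$ the left side of \eqref{2.4} is $\int_1^\infty hf\,dw$.
\item From \eqref{1.11}, $\int_1^\infty K(u,v,w)\,dw$ is bounded in $v$, so $H:=\int_1^\infty h\,dw<\infty$.
\item Let $f=\mathbf{1}_{(1,1+\delta]}+s^{1/p}\mathbf{1}_{(1+\delta,\infty)}$ with $0<s,\delta<1$, so that $c=s$ and $C=1$.
\item The left side is then at most $B\delta^{1+\mu/2}+s^{1/p}H$.
\item On the other side, $\|f\|_{L^p(\mathbb{I};~w^{-\mu}dw)}\geq 2^{-\mu/p}\delta^{1/p}$, and $M_{p,q}(s)^{-1}$ behaves like $p^{1/p}q^{1/q}s^{1/(pq)}$ as $s\to0$.
\item Choose $\delta=s^{2/(p(2+\mu))}$. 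The ratio of the left side to $M_{p,q}(s)^{-1}\|f\|_{L^p(\mathbb{I};~w^{-\mu}dw)}$ is then $O\big(s^{\mu/(p^2(2+\mu))}\big)$, which tends to $0$.
\end{itemize}
What survives is only the trivial estimate
$\|{}_\mu\mathfrak{T}_uf\|_{L^1(\mathbb{I};~v^{-\mu}dv)}\geq (c/C)^{1/p}\,\|{}_\mu\mathfrak{T}_u1\|_{L^1(\mathbb{I};~v^{-\mu}dv)}\,\|1\|^{-1}_{L^p(\mathbb{I};~w^{-\mu}dw)}\,\|f\|_{L^p(\mathbb{I};~w^{-\mu}dw)}$.
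In it the reverse H\"older constant plays no role.
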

 \begin{proof}
 Let $F_1(w)=f^p(w)K(u,v,w)>0$ and $F_2(w)=K(u,v,w)>0$ are two functions satisfying the condition $0<c\leq \frac{F_1}{F_2}\leq  C<\infty$. Then using \eqref{2.3} and then \eqref{2.2}, we have
 \begin{eqnarray*}
 M_{p,q}\Big(\frac{c}{C}\Big)(_\mu\mathfrak{T}_uf)(v)&=& M_{p,q}\Big(\frac{c}{C}\Big)\int\limits_1^\infty F_1^{\frac{1}{p}}(w)F_2^{\frac{1}{q}}(w)dw
 \\
 &\geq& \left(\int\limits_1^\infty F_1(w)dw \right)^{\frac{1}{p}}\left(\int\limits_1^\infty F_2(w)dw \right)^{\frac{1}{q} }
 \\
  &\geq& C_{\mu,\epsilon} (uv)^{-\mathfrak{R}\mu}\|f\|_{L^p(\mathbb{I};~w^{-\mathfrak{R}\mu}dw)} \left(\int\limits_1^\infty w^{-\mathfrak{R}\mu} dw \right)^{\frac{1}{q} }.
 \end{eqnarray*}
 Involved integral is convergent for $\mathfrak{R}\mu>1$. Thus
 \begin{eqnarray*}
 (_\mu\mathfrak{T}_uf)(v)&\geq& C^\prime_{\mu,\epsilon}\Big[M_{p,q}\Big(\frac{c}{C}\Big)\Big]^{-1} (uv)^{-\mathfrak{R}\mu}\|f\|_{L^p(\mathbb{I};~w^{-\mathfrak{R}\mu}dw)}. 
 \end{eqnarray*}
Thus
\begin{eqnarray*}
\|_\mu\mathfrak{T}_uf\|_{L^p(\mathbb{I};~v^{-\mathfrak{R}\mu}dv)} &\geq& C^{\prime\prime}_{\mu,\epsilon}\Big[M_{p,q}\Big(\frac{c}{C}\Big)\Big]^{-1} u^{-\mathfrak{R}\mu}\|f\|_{L^p(\mathbb{I};~w^{-\mathfrak{R}\mu}dw)},
\end{eqnarray*}
where $C^{\prime\prime}_{\mu,\epsilon}>0$ is constant. Further, for some  fixed $u\in \mathbb{I}, ~u^{-\mathfrak{R}\mu}>1$. So , we can rewrite the above estimate as
\begin{eqnarray*}
\|_\mu\mathfrak{T}_uf\|_{L^p(\mathbb{I};~v^{-\mathfrak{R}\mu}dv)} &\geq& C^{\prime\prime}_{\mu,\epsilon}\Big[M_{p,q}\Big(\frac{c}{C}\Big)\Big]^{-1} \|f\|_{L^p(\mathbb{I};~w^{-\mathfrak{R}\mu}dw)}.
\end{eqnarray*}
Hence, \eqref{2.4} is obtained.
 \end{proof}
 \begin{theorem}
 Let $f\in L^p(\mathbb{I};~w^{-\mathfrak{R}\mu}dw), ~g\in L^p(\mathbb{I};~v^{-\mathfrak{R}\mu}dv)$ are two function satisfying the conditions 
\begin{eqnarray}
 0<f\leq C_1<\infty\quad \& \quad 0<g\leq C_2<\infty, \label{2.5}
\end{eqnarray} 
where $C_1,C_2>0$ are constants and $1<p<\infty$, then reverse convolution estimate is given as
 \begin{eqnarray}
 \|f\underset{\mu}{*}g\|_{L^p(\mathbb{I};~u^{-\mathfrak{R}\mu}du)}\geq C^\prime_{\mu,\epsilon,p}   \|f\|_{L^p(\mathbb{I};~w^{-\mathfrak{R}\mu}dw)}\|g\|_{L^p(\mathbb{I};~v^{-\mathfrak{R}\mu}dv)},\label{2.6}
 \end{eqnarray}
 where $C^\prime_{\mu,\epsilon,p}>0 $ is constant and $\mathfrak{R}\mu>\frac{1}{2}$.
 \end{theorem}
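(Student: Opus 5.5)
We follow the pattern of the preceding theorem and apply the reverse H\"older inequality \eqref{2.3} on the product space $X=\mathbb{I}\times\mathbb{I}$ with the measure $d\sigma=K(u,v,w)\,dv\,dw$. For fixed $u\in\mathbb{I}$ we set
\begin{eqnarray*}
F_1(v,w)=f^p(w)g^p(v)\quad\text{and}\quad F_2(v,w)=1,
\end{eqnarray*}
so that, by \eqref{1.13},
\begin{eqnarray*}
(f\underset{\mu}{*}g)(u)=\int\limits_X F_1^{\frac1p}F_2^{\frac1q}\,d\sigma .
\end{eqnarray*}
The ratio $F_1/F_2=f^pg^p$ is bounded above by $C=(C_1C_2)^p$ by \eqref{2.5}. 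For a positive lower bound $c$ we restrict to the region where $Y\geq\epsilon$, as in \eqref{2.2}. There we also assume $f$ and $g$ are bounded away from $0$; this is implicit in the hypotheses, exactly as in the previous theorem. The reverse H\"older inequality then gives
\begin{eqnarray*}
M_{p,q}\Big(\frac{c}{C}\Big)(f\underset{\mu}{*}g)(u)\geq \left(\int\limits_X f^p(w)g^p(v)K\,dv\,dw\right)^{\frac1p}\left(\int\limits_X K\,dv\,dw\right)^{\frac1q}.
\end{eqnarray*}

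Next we insert the lower bound \eqref{2.2}, $K(u,v,w)\geq C_{\mu,\epsilon}(uvw)^{-\mathfrak{R}\mu}$, into both factors. The first factor is bounded below by
\begin{eqnarray*}
C_{\mu,\epsilon}^{1/p}u^{-\mathfrak{R}\mu/p}\|f\|_{L^p(\mathbb{I};~w^{-\mathfrak{R}\mu}dw)}\|g\|_{L^p(\mathbb{I};~v^{-\mathfrak{R}\mu}dv)},
\end{eqnarray*}
because the double integral factorizes. The second factor is bounded below by
\begin{eqnarray*}
C_{\mu,\epsilon}^{1/q}u^{-\mathfrak{R}\mu/q}\Big(\int_1^\infty v^{-\mathfrak{R}\mu}dv\Big)^{2/q},
\end{eqnarray*}
which is a finite positive constant whenever the integral converges. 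Combining the two gives the pointwise bound
\begin{eqnarray*}
(f\underset{\mu}{*}g)(u)\geq C'\,u^{-\mathfrak{R}\mu}\|f\|\,\|g\| .
\end{eqnarray*}

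Finally we raise this to the $p$-th power and integrate against $u^{-\mathfrak{R}\mu}du$. This yields
\begin{eqnarray*}
\|f\underset{\mu}{*}g\|_{L^p(\mathbb{I};~u^{-\mathfrak{R}\mu}du)}\geq C'\Big(\int_1^\infty u^{-(p+1)\mathfrak{R}\mu}du\Big)^{1/p}\|f\|\,\|g\|.
\end{eqnarray*}
The $u$-integral converges when $(p+1)\mathfrak{R}\mu>1$, which holds for $\mathfrak{R}\mu>\frac12$ and $p>1$. Absorbing everything, including $M_{p,q}(c/C)^{-1}$, into $C'_{\mu,\epsilon,p}$ gives \eqref{2.6}.

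The main obstacle is the constant factor $\int_X K\,dv\,dw$ and the associated convergence conditions. The crude bound $\int_1^\infty v^{-\mathfrak{R}\mu}dv$ requires $\mathfrak{R}\mu>1$, not $\frac12$. To reach $\mathfrak{R}\mu>\frac12$ we would avoid this crude bound. One option is to integrate over the bounded subregion where $Y\geq\epsilon$, which gives a finite positive constant directly. Alternatively, we can use $\int_1^\infty K(u,v,w)\,dw=\,_\mu\mathfrak{T}_u1$, a quantity computed from the product formula at $\eta$ corresponding to $P^{-\mu}$. Either way, only a positive finite lower bound is needed for the $(\int F_2)^{1/q}$ factor, so restricting the domain of integration to a compact region where $Y\geq\epsilon$ is the step we would try first.
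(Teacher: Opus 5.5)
You have a genuine gap: the positive lower bound $c$ for $F_1/F_2=f^p(w)g^p(v)$, without which the reverse H\"older inequality \eqref{2.3} cannot be invoked. Hypothesis \eqref{2.5} gives only $0<f\le C_1$ and $0<g\le C_2$. Unlike the preceding theorem and \eqref{2.9}, no lower bound is assumed, and it is not implicit. For $\frac12<\mathfrak{R}\mu\le 1$, no function bounded below by a positive constant lies in $L^p(\mathbb{I};~w^{-\mathfrak{R}\mu}dw)$, because $\int_1^\infty w^{-\mathfrak{R}\mu}dw=\infty$. So your argument is vacuous exactly in the range that distinguishes this statement from \eqref{2.4}. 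In effect you have re-run the proof of \eqref{2.4} and inherited its restriction $\mathfrak{R}\mu>1$, which is what your last paragraph runs into. Even for $\mathfrak{R}\mu>1$, your constant contains $M_{p,q}(c/C)^{-1}$, which tends to $0$ as $c\to0$.

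The factor $\int_X K\,dv\,dw$ is also worse than a convergence nuisance. For real $\mu$ and fixed $u$, $\int_1^\infty K(u,v,w)\,dw$ tends to a positive limit as $v\to\infty$ (for $\mu=0$ it is identically $1$). So this integral is $+\infty$, and \eqref{2.3} is applied outside its hypotheses; combined with your extra assumption it even forces $f\underset{\mu}{*}g\equiv\infty$. Your proposed remedy does not work either. The set $\{(v,w):Y(u,v,w)\ge\epsilon\}$ is unbounded, since $Y(u,t,t)=(u-1)(2t^2-u-1)\to\infty$. Shrinking $X$ to a compact piece of it destroys the factorization of the first factor into the full norms $\|f\|\,\|g\|$.

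The idea you are missing, and the one the paper uses, is to exploit the \emph{upper} bounds instead of reverse H\"older. Since $f^{p-1}\le C_1^{p-1}$ and $g^{p-1}\le C_2^{p-1}$, one has $f^p(w)g^p(v)\le (C_1C_2)^{p-1}f(w)g(v)$. As $K\ge0$, this gives $(f\underset{\mu}{*}g)(u)\ge (C_1C_2)^{1-p}(f^p\underset{\mu}{*}g^p)(u)$, which is \eqref{2.8}. Then \eqref{2.2} is inserted into $f^p\underset{\mu}{*}g^p$, whose double integral factorizes as $C_{\mu,\epsilon}u^{-\mathfrak{R}\mu}\int_1^\infty f^p(w)w^{-\mathfrak{R}\mu}dw\int_1^\infty g^p(v)v^{-\mathfrak{R}\mu}dv$. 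No lower bound on $f,g$ and no factor $\int K\,dv\,dw$ ever appear. The only convergence needed is that of $\int_1^\infty u^{-(p+1)\mathfrak{R}\mu}du$, which holds for $\mathfrak{R}\mu>\frac12$.

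Two caveats about that route. First, carried out carefully it yields $\|f\|^p\|g\|^p$ rather than $\|f\|\,\|g\|$ on the right; the paper's display drops these exponents. Second, like your first factor, it uses \eqref{2.2} on all of $\mathbb{I}\times\mathbb{I}$, although $K$ vanishes where $Y\le0$. So your unease about the region $Y\ge\epsilon$ is justified, but restricting the domain as you suggest does not resolve it.
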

 \begin{proof}
Using \eqref{1.13}, we have
\begin{eqnarray}
(f^p\underset{\mu}{*}g^p)(u)=\int\limits_1^\infty\int\limits_1^\infty K(u,v,w)f^p(w)g^p(v)dvdw.\label{2.7}
\end{eqnarray}
In view of \eqref{2.5} and \eqref{2.7}, we can write
\begin{eqnarray}
(f^p\underset{\mu}{*}g^p)(u) \leq (C_1C_2)^{p-1}(f\underset{\mu}{*}g)(u).\label{2.8}
\end{eqnarray}
Now, from \eqref{2.7}, \eqref{2.8} and \eqref{2.2}, we get
\begin{eqnarray*}
(f\underset{\mu}{*}g)(u)\geq C_{\mu,\epsilon} (C_1C_2)^{1-p} u^{-\mathfrak{R}\mu} \|f\|_{L^p(\mathbb{I};~w^{-\mathfrak{R}\mu}dw)}\|g\|_{L^p(\mathbb{I};~v^{-\mathfrak{R}\mu}dv)}.
\end{eqnarray*}
Thus
\begin{eqnarray*}
\|f\underset{\mu}{*}g\|_{L^p(\mathbb{I};~u^{-\mathfrak{R}\mu}du)}\geq C^\prime_{\mu,\epsilon,p}   \|f\|_{L^p(\mathbb{I};~w^{-\mathfrak{R}\mu}dw)}\|g\|_{L^p(\mathbb{I};~v^{-\mathfrak{R}\mu}dv)},
\end{eqnarray*}
where $C^\prime_{\mu,\epsilon,p}>0 $ is constant and $\mathfrak{R}\mu>\frac{1}{2}$. Hence, \eqref{2.6} is obtained.
 \end{proof}
\begin{theorem}
 Let $\rho_1$ and $\rho_2$ be two positive functions. Suppose that $f\in L^p(\mathbb{I};~w^{-\mathfrak{R}\mu}\rho_1(w) dw)$ and $ ~g\in L^p(\mathbb{I};~v^{-\mathfrak{R}\mu} \rho_2(v) dv)$ satisfy the conditions
\begin{eqnarray}
 0<c_1^{\frac{1}{p}}\leq f\leq C_1^{\frac{1}{p}}<\infty, \quad \text{and} \quad 0<c_2^{\frac{1}{p}}\leq g\leq C_2^{\frac{1}{p}}<\infty, \label{2.9}
\end{eqnarray} 
where $c_1,~c_2,~C_1,~C_2>0$ are constants and $1<p<\infty$. Then reverse convolution estimate is obtained as
\begin{eqnarray}
\nonumber
&&\|(f\rho_1\underset{\mu}{*}g\rho_2)\big(\rho_1\underset{\mu}{*}\rho_2\big)^{\frac{1}{p}-1}\|_{L^p(\mathbb{I};~u^{-\mathfrak{R}\mu}du)}
\\
 &\geq &C^\prime_{\mu,\epsilon,p} \left [M_{p,q}\Big(\frac{c_1c_2}{C_1C_2}\Big)\right]^{-1}  \|f\|_{L^p(\mathbb{I};~w^{-\mathfrak{R}\mu}dw)}
 \|g\|_{L^p(\mathbb{I};~v^{-\mathfrak{R}\mu}dv)},\label{2.10}
\end{eqnarray}
where $C^\prime_{\mu,\epsilon,p}>0$ is constant and $\mathfrak{R}\mu>\frac{1}{2}$.
\end{theorem}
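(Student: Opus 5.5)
We apply the reverse H\"older inequality \eqref{2.3} on the product space $X=\mathbb{I}\times\mathbb{I}$, with measure $d\sigma=K(u,v,w)\rho_1(w)\rho_2(v)\,dv\,dw$ and $u\in\mathbb{I}$ fixed. We then bound the resulting double integral from below with \eqref{2.2}, as in the proof of \eqref{2.6}.

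First, set
\[
F_1(v,w)=f^p(w)g^p(v)K(u,v,w)\rho_1(w)\rho_2(v),\qquad F_2(v,w)=K(u,v,w)\rho_1(w)\rho_2(v).
\]
Both are positive where $Y>0$, and by \eqref{2.9} we have $c_1c_2\le F_1/F_2=f^p(w)g^p(v)\le C_1C_2$. Also $F_1^{1/p}F_2^{1/q}=f(w)g(v)K\rho_1\rho_2$. By \eqref{1.13}, $\iint F_1^{1/p}F_2^{1/q}=(f\rho_1\underset{\mu}{*}g\rho_2)(u)$ and $\iint F_2=(\rho_1\underset{\mu}{*}\rho_2)(u)$. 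Applying \eqref{2.3} with $c=c_1c_2$ and $C=C_1C_2$ gives
\[
M_{p,q}\Big(\tfrac{c_1c_2}{C_1C_2}\Big)(f\rho_1\underset{\mu}{*}g\rho_2)(u)\ge\Big(\iint F_1\Big)^{1/p}(\rho_1\underset{\mu}{*}\rho_2)(u)^{1/q}.
\]
Since $1/q=1-1/p$, multiplying both sides by $(\rho_1\underset{\mu}{*}\rho_2)(u)^{\frac1p-1}$ isolates
\[
(f\rho_1\underset{\mu}{*}g\rho_2)(\rho_1\underset{\mu}{*}\rho_2)^{\frac1p-1}(u)\ \ge\ \Big[M_{p,q}\Big(\tfrac{c_1c_2}{C_1C_2}\Big)\Big]^{-1}\Big(\iint F_1\Big)^{1/p}.
\]

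Second, we bound $\iint F_1=(f^p\rho_1\underset{\mu}{*}g^p\rho_2)(u)$ from below by the kernel estimate \eqref{2.2}: $K\ge C_{\mu,\epsilon}(uvw)^{-\mathfrak{R}\mu}$. This gives
\[
\iint F_1\ge C_{\mu,\epsilon}u^{-\mathfrak{R}\mu}\int_1^\infty f^p(w)\rho_1(w)w^{-\mathfrak{R}\mu}dw\int_1^\infty g^p(v)\rho_2(v)v^{-\mathfrak{R}\mu}dv.
\]
Taking $p$-th roots produces the weighted norms of $f$ and $g$.

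Third, we take the $L^p(\mathbb{I};u^{-\mathfrak{R}\mu}du)$ norm in $u$. The $u$-dependence on the right is $u^{-\mathfrak{R}\mu/p}$, so its $p$-th power against $u^{-\mathfrak{R}\mu}du$ is $\int_1^\infty u^{-2\mathfrak{R}\mu}du$, which converges exactly when $\mathfrak{R}\mu>\frac12$. This matches the hypothesis and absorbs into $C'_{\mu,\epsilon,p}$.

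The main obstacle is that the statement has the unweighted norms $\|f\|_{L^p(w^{-\mathfrak{R}\mu}dw)}$ and $\|g\|_{L^p(v^{-\mathfrak{R}\mu}dv)}$, while the argument naturally gives the $\rho_1$- and $\rho_2$-weighted norms. To bridge this I would assume, as the authors implicitly do with $\epsilon$, a positive lower bound $\rho_i\ge\delta_i>0$ on the region $Y\ge\epsilon$, and absorb $(\delta_1\delta_2)^{1/p}$ into the constant. Failing that, I would state the result with the weighted norms.

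A second delicate point is that \eqref{2.2} holds only where $Y\ge\epsilon$, so the lower bound in the second step really restricts the $v$- and $w$-integrals to that region. I would handle this as the paper does for \eqref{2.4} and \eqref{2.6}: treat $C_{\mu,\epsilon}$ as covering it and let the final constant depend on $\epsilon$.
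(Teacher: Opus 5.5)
Your first step is correct and is exactly the paper's first step. Reverse H\"older on $\{(v,w):Y(u,v,w)>0\}$, where $K>0$ (this needs $\mu$ real, so below $\mathfrak{R}\mu=\mu$), gives
\[
(f\rho_1\underset{\mu}{*}g\rho_2)(u)\,\big((\rho_1\underset{\mu}{*}\rho_2)(u)\big)^{\frac1p-1}\ \ge\ \Big[M_{p,q}\Big(\frac{c_1c_2}{C_1C_2}\Big)\Big]^{-1}\big((f^p\rho_1\underset{\mu}{*}g^p\rho_2)(u)\big)^{\frac1p}.
\]
You flagged both later obstacles yourself, but neither remedy works. The real gap is your second step, and letting $C_{\mu,\epsilon}$ ``cover'' the set where $Y<\epsilon$ cannot close it. Write $u=\cosh a$, $v=\cosh b$, $w=\cosh c$. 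Then $Y>0$ iff $|a-b|<c<a+b$, so for fixed $u,v$ the kernel $K(u,v,\cdot)$ vanishes outside $\big(uv-\sqrt{(u^2-1)(v^2-1)},\,uv+\sqrt{(u^2-1)(v^2-1)}\big)$, in particular for $w\ge 2uv$. Moreover, for $\mu>\frac12$ it tends to $0$ as $Y\to0^+$. So \eqref{2.2} yields only $C_{\mu,\epsilon}u^{-\mu}\iint_{\{Y(u,v,w)\ge\epsilon\}}f^pg^p\rho_1\rho_2\,(vw)^{-\mu}\,dv\,dw$. This is an integral over a $u$-dependent set, not a product of two norms, and no constant repairs that. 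For instance, let $\rho_2$ live on $[2,3]$ and $\rho_1$ on $[W,W+1]$ (or take positive functions with negligible mass elsewhere). Then $(f^p\rho_1\underset{\mu}{*}g^p\rho_2)(u)=0$ for every $u\le W/6$, whereas your claimed lower bound is at least $C_{\mu,\epsilon}c_1c_2\,3^{-\mu}\big(u(W+1)\big)^{-\mu}>0$. The paper's proof makes the same move: it applies \eqref{2.2} on all of $\mathbb{I}\times\mathbb{I}$ and silently drops $\rho_1,\rho_2$. So you located its weak points correctly, but the argument does not prove the statement.

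Integrating in $u$ first does not help, and neither of your fallbacks is true. Continue the product formula analytically to $\nu=\mu$, where $P^{-\mu}_{\mu}(x)=(x^2-1)^{\mu/2}/(2^{\mu}\Gamma(\mu+1))$; this is allowed because $K(\cdot,v,w)$ has compact support. It gives $\int_1^\infty K(u,v,w)(u^2-1)^{\mu/2}\,du=[(v^2-1)(w^2-1)]^{\mu/2}/(2^{\mu}\Gamma(\mu+1))$ (cf.\ \eqref{4.4}). As $v\to1^+$ the $u$-support $[\cosh(c-b),\cosh(c+b)]$ shrinks to $\{w\}$, so $\int_1^\infty K(u,v,w)\,u^{-\mu}\,du=O\big((v-1)^{\mu/2}\big)$ locally uniformly in $w$. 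Hence the kernel bound $\int_1^\infty K\,u^{-\mu}du\ge C(vw)^{-\mu}$, which your third step really needs, is false. Concretely, fix $f,g$ with values in $[1,2]$ and take $\rho_1,\rho_2$ concentrated on $[2,3]$ and $[1,1+h]$. Since $f\rho_1\underset{\mu}{*}g\rho_2\le(C_1C_2)^{1/p}\,\rho_1\underset{\mu}{*}\rho_2$, the $p$-th power of the left side of \eqref{2.10} is $O(h^{1+\mu/2})$. Yet the $p$-th power of even the $\rho$-weighted right side is bounded below by a constant times $h$. The unweighted right side fails more simply: $\rho_i\mapsto t\rho_i$ multiplies the left side by $t^{2/p}$ and leaves the right side unchanged. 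For $\mu\le1$ that side is even $+\infty$, because $f\ge c_1^{1/p}$. Your patch $\rho_i\ge\delta_i$ is not usable either. By the same identity (with $u,w$ interchanged, $K$ being symmetric) and $c<a+b$ on the support, $\int_1^\infty K(u,v,w)\,dw$ stays bounded away from $0$ as $v\to\infty$, so $\rho_1\underset{\mu}{*}\rho_2\equiv\infty$. What the method honestly proves is the displayed pointwise inequality, hence its $L^p(\mathbb{I};u^{-\mathfrak{R}\mu}du)$ version with $\|(f^p\rho_1\underset{\mu}{*}g^p\rho_2)^{1/p}\|$ on the right. Reaching norms of $f$ and $g$ requires a different statement, not a different constant.
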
 
\begin{proof}
We assume that  $F_1(v,w)=f^{p}(w)g^{p}(v)\rho_1(w)\rho_2(v)K(u,v,w)>0$ and 
$F_2(v,w)=\rho_1(w)\rho_2(v)K(u,v,w)>0$, such that $0<c\leq \frac{F_1}{F_2}\leq C<\infty$. Then, using \eqref{1.13}, and reverse H\"older's inequality \eqref{2.3}, we have
\begin{eqnarray*}
&& M_{p,q}\Big(\frac{c_1c_2}{C_1C_2}\Big)(f\rho_1\underset{\mu}{*}g\rho_2)(u) \\
&&=M_{p,q}\Big(\frac{c_1c_2}{C_1C_2}\Big) \int\limits_0^\infty\int\limits_0^\infty F^{\frac{1}{p}}_1(v,w)F^{\frac{1}{q}}_2(v,w) dvdw\\
&&\geq  \Big(\int\limits_0^\infty\int\limits_0^\infty F_1(v,w) dvdw \Big)^{\frac{1}{p}}\Big(\int\limits_0^\infty\int\limits_0^\infty F_2(v,w) dvdw \Big)^{\frac{1}{q}}
\\
&&= \Big(\int\limits_0^\infty\int\limits_0^\infty f^{p}(w)g^{p}(v)\rho_1(w)\rho_2(v)K(u,v,w) dvdw \Big)^{\frac{1}{p}}\big((\rho_1\underset{\mu}{*}\rho_2)(u)\big)^{\frac{1}{q}}.
\end{eqnarray*}
Using \eqref{2.2}, we can write
\begin{eqnarray*}
&&\big((f\rho_1\underset{\mu}{*}g\rho_2)(u) \big)^p\big((\rho_1\underset{\mu}{*}\rho_2)(u)\big)^{1-p}
\\
&&\geq C_{\mu,\epsilon} \left [M_{p,q}\Big(\frac{c_1c_2}{C_1C_2}\Big)\right]^{-p} u^{-\mathfrak{R}\mu} \|f\|^p_{L^p(\mathbb{I};~w^{-\mathfrak{R}\mu}dw)}\|g\|^p_{L^p(\mathbb{I};~v^{-\mathfrak{R}\mu}dv)}.
\end{eqnarray*}
Thus
\begin{eqnarray*}
&&\|(f\rho_1\underset{\mu}{*}g\rho_2)\big(\rho_1\underset{\mu}{*}\rho_2\big)^{\frac{1}{p}-1}\|_{L^p(\mathbb{I};~u^{-\mathfrak{R}\mu}du)}
\\
 &&\geq C^\prime_{\mu,\epsilon,p} \left [M_{p,q}\Big(\frac{c_1c_2}{C_1C_2}\Big)\right]^{-1}  \|f\|_{L^p(\mathbb{I};~w^{-\mathfrak{R}\mu}dw)}
 \|g\|_{L^p(\mathbb{I};~v^{-\mathfrak{R}\mu}dv)},
\end{eqnarray*}
where $C^\prime_{\mu,\epsilon,p}>0$ is constant and $\mathfrak{R}\mu>\frac{1}{2}$. Thus, \eqref{2.10} is obtained.
\end{proof}

 \section{Continuity of translation and convolution operators}
 The literature contains several studies on the continuity properties of integral transforms, together with their associated convolution and translation operators on certain test function spaces; see \cite{index,Lebedev3,Hm,Sn,Gla1,Yaku,AP,AP2}. Inspired by these works, we introduce in this section a test function space and prove continuity theorems for the $\mu$MFT.

 \par 
 We consider  the test function space $\mathfrak{H}_{l,r,n}$, where $l,r\geq 0,~n\in \mathbb{N}_0$, analogous to the function space as defined in \cite[p. 169]{AP} containing  functions $\varphi$,  such that 
 \begin{eqnarray}
 \lambda_{l,r,n}(\varphi)=\sup\limits_{u\in \mathbb{I}}|e^{-lu}(u^2-1)^r A^n_u \varphi(u)|<\infty.\label{3.1}
 \end{eqnarray}
\begin{enumerate}
\item As an example, consider an example $\varphi(u)=e^{-u}$, which belongs to the above-defined function space $\mathfrak{H}_{l,r,1}, ~l\geq 0,~r\geq 1$.
\item As another example, consider another example $\varphi(u)=(u-1)^n e^{-u}$, which belongs to the above-defined function space $\mathfrak{H}_{l,r,n},$ where $ ~l,r\geq 0,$ $n\in \mathbb{N}_0$.
\item For a fixed $\eta>0$, the Legendre function $P^{-\mu}_{-\frac{1}{2}+i\eta}(u)$ belongs to the function space $\mathfrak{H}_{l,r,n}$ for $ l\geq 0,$$~r\geq \frac{1}{4}$ and $ n\in \mathbb{N}_0$.
\end{enumerate}
\begin{theorem}
If the translation operator  $_\mu\mathfrak{T}_u$ is defined as \eqref{1.12}, then it is a continuous linear mapping from  function space $\mathfrak{H}_{0,m,n}, $ $ \frac{1}{4}<m<\frac{3}{4},$ ~\\$ ~n\in\mathbb{N}_0$ into $\mathfrak{H}_{l,r,n},~l>0,~r\geq\frac{1}{4}, ~n\in \mathbb{N}_0$.
\end{theorem}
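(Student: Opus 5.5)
The plan is to move the operator $A^n$ from the translated function onto $f$ itself, and then control what remains by integrals of the kernel $K$ against the weight $(w^2-1)^{-m}$.

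\textbf{Step 1: commute $A^n$ with the translation.} For $f\in\mathfrak{H}_{0,m,n}$ we first show that
\[
A_u^n\,({}_\mu\mathfrak{T}_u f)(v)=({}_\mu\mathfrak{T}_u A^n f)(v).
\]
Formally this follows from the product formula and \eqref{1.7}. Since $K(u,v,w)$ is symmetric and is an integral of $P^{-\mu}_{-\frac12+i\eta}(u)P^{-\mu}_{-\frac12+i\eta}(v)P^{-\mu}_{-\frac12+i\eta}(w)$, we have $A_uK=A_vK=A_wK$. Hence
\[
A_u^n\int_1^\infty K(u,v,w)f(w)\,dw=\int_1^\infty A_w^nK(u,v,w)\,f(w)\,dw .
\]
We then integrate by parts $2n$ times using the self-adjoint form \eqref{1.6}. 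This gives $\int_1^\infty K(u,v,w)\,A_w^nf(w)\,dw$.

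The boundary terms at $w\to1^+$ and $w\to\infty$ vanish for two reasons. First, $K(u,v,\cdot)$ is compactly supported in $w$ for fixed $u,v$: the condition $Y>0$ forces $w$ to lie between $uv\pm\sqrt{(u^2-1)(v^2-1)}$, both of which are $>1$. Second, the weight $(u^2-1)^{\mu+1}$ in \eqref{1.6} helps near $w=1$. The support remark also justifies differentiating under the integral sign.

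Linearity of ${}_\mu\mathfrak{T}_u$ is immediate.

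\textbf{Step 2: estimate.} Since $f\in\mathfrak{H}_{0,m,n}$, we have $|A_w^nf(w)|\le\lambda_{0,m,n}(f)(w^2-1)^{-m}$. Therefore
\[
\big|e^{-lv}(v^2-1)^rA_v^n({}_\mu\mathfrak{T}_uf)(v)\big|\le\lambda_{0,m,n}(f)\,e^{-lv}(v^2-1)^r\int_1^\infty|K(u,v,w)|(w^2-1)^{-m}\,dw .
\]
It remains to show that $\sup_{v}$ of the right-hand side is finite. Here I would use \eqref{1.11} directly. On the support, the integrand is bounded by
\[
Y^{\mathfrak{R}\mu-\frac12}(w^2-1)^{-\frac{\mathfrak{R}\mu}{2}-m}\big[(u^2-1)(v^2-1)\big]^{-\frac{\mathfrak{R}\mu}{2}} .
\]
Two facts control this. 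First, $Y\le (u^2-1)(v^2-1)$; indeed $Y$ is a quadratic in $w$ with that maximum. Second, the support has length $2\sqrt{(u^2-1)(v^2-1)}$. Together these yield a bound of polynomial growth in $v$. The factor $e^{-lv}$ with $l>0$ absorbs the growth at infinity. Near $v=1$, the factor $(v^2-1)^r$ with $r\ge\frac14$ must absorb any singularity. The restriction $\frac14<m<\frac34$ should be exactly what makes $\int(w^2-1)^{-m}\cdots$ integrable and the exponents balance. Alternatively, one could use the spectral representation \eqref{1.10} with \eqref{1.3}–\eqref{1.5}, mimicking the third example, which places $P^{-\mu}_{-\frac12+i\eta}$ in $\mathfrak{H}_{l,r,n}$ with $r\ge\frac14$.

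\textbf{Step 3: continuity.} Steps 1 and 2 give
\[
\lambda_{l,r,n}({}_\mu\mathfrak{T}_uf)\le C_u\,\lambda_{0,m,n}(f),
\]
and this implies continuity of the linear map.

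The main obstacle is Step 2: bounding $\int K(u,v,w)(w^2-1)^{-m}dw$ uniformly near $v\to1^+$. There the support shrinks toward $w=u$, and the negative powers of $(v^2-1)$ in $K$ must cancel against the small support length and the smallness of $Y$. I would first try the substitution $w=uv+\sqrt{(u^2-1)(v^2-1)}\cos\theta$. This turns the integral into a beta-type integral $\int_0^\pi\sin^{2\mathfrak{R}\mu}\theta\,(\cdots)\,d\theta$, which should make the $v$-dependence explicit.
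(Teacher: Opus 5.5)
\textbf{There is a genuine gap in Step 2.} Your Step 1 is the paper's identity \eqref{3.2}, obtained the same formal way (product formula, \eqref{1.7}, self-adjointness of $A$). The two routes split at Step 2.

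The paper never looks at $Y$ or at the support. It quotes the pointwise bound \eqref{3.3}, $|K(u,v,w)|\le |C_\mu|\,[(u^2-1)(v^2-1)(w^2-1)]^{-1/4}$, which separates the variables. All that remains is $\int_1^\infty (w^2-1)^{-m-\frac14}\,dw$, finite exactly for $\frac14<m<\frac34$. The conditions $l>0$, $r\ge\frac14$ absorb the factor $(u^2-1)^{-1/4}$ in the free variable $u$, with $v$ fixed; by symmetry of $K$ this matches your choice of fixing $u$.

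Your Step 2 is only sketched, and the two facts you cite do not deliver it:
\begin{enumerate}
\item The bound $Y\le (u^2-1)(v^2-1)$ controls $Y^{\mathfrak{R}\mu-\frac12}$ from above only when $\mathfrak{R}\mu\ge\frac12$.
\item Write $u=\cosh a$, $v=\cosh b$. The support of $K(u,v,\cdot)$ is $(\cosh(a-b),\cosh(a+b))$, whose left endpoint equals $1$ when $v=u$. So your claim that both endpoints exceed $1$ fails exactly at $v=u$.
\end{enumerate}
The second point, together with the non-smoothness of $K$ at the edges of its support, also undercuts the boundary-term argument in Step 1. The dangerous point is $v\to u$, not $v\to 1^+$.

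Your own substitution $w(\theta)=uv+\sqrt{(u^2-1)(v^2-1)}\cos\theta$ makes this explicit. For real $\mu$ it gives
\[
\int_1^\infty K(u,v,w)\,(w^2-1)^{-m}\,dw
= c_\mu\big[(u^2-1)(v^2-1)\big]^{\frac{\mu}{2}}\int_0^\pi \sin^{2\mu}\theta\,\big(w(\theta)^2-1\big)^{-\frac{\mu}{2}-m}\,d\theta ,
\]
with $c_\mu$ the constant in \eqref{1.11}.
\begin{itemize}
\item As $v\to1^+$ this is $\asymp (v^2-1)^{\mu/2}$, which is harmless since $r\ge\frac14>-\frac{\mu}{2}$.
\item As $v\to\infty$ it is $\asymp v^{-2m}$, also harmless.
\item At $v=u$ it breaks down. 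Since $Y(u,u,w)=(w-1)(2u^2-1-w)$, one gets $K(u,u,w)\sim c\,(w-1)^{\frac{\mu-1}{2}}$ as $w\to1^+$. The integral therefore diverges at $v=u$ as soon as $m\ge\frac{\mu+1}{2}$.
\end{itemize}
For $\mu\in(-\frac12,\frac12)$ this threshold lies inside $(\frac14,\frac34)$.

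For a concrete failure, take $\mu=0$, $m=\frac12$, $n=0$ and $\varphi(w)=e^{-w}(w^2-1)^{-1/2}\in\mathfrak{H}_{0,\frac12,0}\cap L^1(\mathbb{I})$. Then ${}_0\mathfrak{T}_u\varphi(v)\to+\infty$ as $v\to u$, by Fatou since $K\ge0$. Hence $\lambda_{l,r,0}({}_0\mathfrak{T}_u\varphi)=\infty$ for every $l,r$, and Step 2 cannot be completed in the stated generality.

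Your direct route does work under the extra hypothesis $m<\frac{\mathfrak{R}\mu+1}{2}$, which is automatic when $\mathfrak{R}\mu\ge\frac12$. In that case it does not even need $m>\frac14$.

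The same edge behaviour shows that \eqref{3.3} itself cannot hold for $\mathfrak{R}\mu<\frac12$. There $K$ blows up like $Y^{\mathfrak{R}\mu-\frac12}$ at the edge of its support, while the right side of \eqref{3.3} stays bounded. So the paper's argument is likewise valid only in a restricted range of $\mu$.
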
 
\begin{proof}
Using \eqref{1.12}, we have
\begin{eqnarray*}
A_u^n (_\mu\mathfrak{T}_u\varphi)(v)= \int\limits_1^\infty A^n_uK(u,v,w) \varphi(w)dw
= \int\limits_1^\infty A^n_wK(u,v,w) \varphi(w)dw.
\end{eqnarray*}
 The differential operator $A_u$ is self adjoint that is $<A_uf,g>=<f,A_ug>$ for some function $f$ and $g$ defined over suitable function space, such that
\begin{eqnarray*}
\langle f(w),g(w)\rangle = \int\limits_1^\infty f(w)g(w)dw.
\end{eqnarray*}
Thus, In view of \eqref{1.7}, \eqref{1.10} and the self adjoint property of $A_u$ as described above, we may write
\begin{eqnarray}
\nonumber
A_u^n (_\mu\mathfrak{T}_u\varphi)(v)&=& \int\limits_1^\infty \Bigg(\frac{1}{\pi}(-1)^n \int\limits_0^\infty \Big[\Big(\mu+\frac{1}{2} \Big)^2+\eta^2 \Big]^n \eta \sinh(\pi\eta)\Phi(\eta,\mu) 
\\\nonumber
&\times&P^{-\mu}_{\frac{-1}{2}+i\eta}(u)P^{-\mu}_{\frac{-1}{2}+i\eta}(v)P^{-\mu}_{\frac{-1}{2}+i\eta}(w)d\eta\Bigg)\varphi(w)dw
\\\nonumber
&=& \int\limits_1^\infty \Bigg(\frac{1}{\pi} \int\limits_0^\infty  \eta \sinh(\pi\eta)\Phi(\eta,\mu) P^{-\mu}_{\frac{-1}{2}+i\eta}(u)P^{-\mu}_{\frac{-1}{2}+i\eta}(v)
\\\nonumber
&\times&\big(A_w^nP^{-\mu}_{\frac{-1}{2}+i\eta}(w)\big) d\eta\Bigg)\varphi(w)dw
\\\nonumber
&=& \frac{1}{\pi}\int\limits_0^\infty \Bigg(\int\limits_1^\infty \big(A_w^nP^{-\mu}_{\frac{-1}{2}+i\eta}(w)\big) \varphi(w)dw\Bigg) \eta \sinh(\pi\eta)\Phi(\eta,\mu) 
\\\nonumber
&\times& P^{-\mu}_{\frac{-1}{2}+i\eta}(u)P^{-\mu}_{\frac{-1}{2}+i\eta}(v) d\eta
\\
&=& \int\limits_1^\infty K(u,v,w)A_w^n\varphi(w)dw.\label{3.2}
\end{eqnarray}
Now, from \cite[p. 165]{AP}, we have
\begin{eqnarray}
|K(u,v,w)|\leq |C_\mu|[(u^2-1)(v^2-1)(w^2-1)]^{-\frac{1}{4}}.\label{3.3}
\end{eqnarray}
In view of \eqref{3.1} and \eqref{3.3}, we can write
\begin{eqnarray*}
\sup\limits_{u\in \mathbb{I}}|e^{-lu}(u^2-1)^rA_u^n(_\mu\mathfrak{T}_u\varphi)(v)|&\leq& |C_\mu| \lambda_{0,m,n}(\varphi) \sup\limits_{u\in \mathbb{I}}|e^{-lu}(u^2-1)^{r-\frac{1}{4}}|
\\
&\times& (v^2-1)^{-\frac{1}{4}} \int\limits_1^\infty (w^2-1)^{-m-\frac{1}{4}}dw.
\end{eqnarray*}
For $r\geq \frac{1}{4},~l> 0$ given supremum at the right hand side is bounded. The involved integral is convergent for $\frac{1}{4}<m<\frac{3}{4}$. Thus for some fixed $v\geq \delta>1$ and some positive constant C, we have
\begin{eqnarray*}
\lambda_{l,r,n}(_\mu\mathfrak{T}_u\varphi)|&\leq&C \lambda_{0,m,n}(\varphi) <\infty.
\end{eqnarray*}
Thus the required result is obtained.
\end{proof}
\begin{theorem}
If the convolution operator  $\varphi\underset{\mu}{*} \psi$ is defined as \eqref{1.13} and $\varphi\in \mathfrak{H}_{0,m,p},~\psi\in \mathfrak{H}_{0,m,q}$ such that $ \frac{1}{4}<m<\frac{3}{4},~p+q=n$, then $\varphi\underset{\mu}{*} \psi \in \mathfrak{H}_{l,r,n}$, where $l>0,~r\geq\frac{1}{4}$ and $ n\in \mathbb{N}_0$.
\end{theorem}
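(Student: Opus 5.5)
The plan is to imitate the proof of the preceding theorem on $_\mu\mathfrak{T}_u$. The derivatives will be moved off the kernel and onto $\varphi$ and $\psi$, with $p$ powers going to $\varphi$ and $q$ powers to $\psi$. The resulting integral will then be bounded using \eqref{3.3} and the seminorms $\lambda_{0,m,p}(\varphi)$ and $\lambda_{0,m,q}(\psi)$.

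\textbf{Step 1: distribute $A_u^n$.} Starting from \eqref{1.13} and the representation \eqref{1.10} of $K$, write $A_u^n=A_u^pA_u^q$. By \eqref{1.7}, applying this to $P^{-\mu}_{-\frac12+i\eta}(u)$ inside the $\eta$-integral produces the factor
\[
(-1)^{p+q}\Big[\Big(\mu+\tfrac12\Big)^2+\eta^2\Big]^{p}\Big[\Big(\mu+\tfrac12\Big)^2+\eta^2\Big]^{q}.
\]
Using \eqref{1.7} again in reverse, this factor is the same as applying $A_w^p$ to $P^{-\mu}_{-\frac12+i\eta}(w)$ and $A_v^q$ to $P^{-\mu}_{-\frac12+i\eta}(v)$. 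Then interchange the order of integration and use the self-adjointness of $A$, exactly as in the derivation of \eqref{3.2}, once in $w$ and once in $v$. This should give
\[
A_u^n(\varphi\underset{\mu}{*}\psi)(u)=\int_1^\infty\!\!\int_1^\infty K(u,v,w)\,\big(A_w^p\varphi\big)(w)\,\big(A_v^q\psi\big)(v)\,dv\,dw .
\]
In other words, $A_u^n(\varphi\underset{\mu}{*}\psi)=(A^p\varphi)\underset{\mu}{*}(A^q\psi)$.

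\textbf{Step 2: bound the integral.} Use \eqref{3.3}, $|K|\le|C_\mu|[(u^2-1)(v^2-1)(w^2-1)]^{-1/4}$, together with $|A_w^p\varphi(w)|\le\lambda_{0,m,p}(\varphi)(w^2-1)^{-m}$ and the analogous bound for $\psi$. This gives
\[
|e^{-lu}(u^2-1)^rA_u^n(\varphi\underset{\mu}{*}\psi)(u)|\le |C_\mu|\,\lambda_{0,m,p}(\varphi)\lambda_{0,m,q}(\psi)\,e^{-lu}(u^2-1)^{r-\frac14}\Big(\int_1^\infty (w^2-1)^{-m-\frac14}dw\Big)^2 .
\]
The integral converges at $w=1$ because $m+\tfrac14<1$, i.e.\ $m<\tfrac34$. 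It converges at infinity because $2m+\tfrac12>1$, i.e.\ $m>\tfrac14$. The factor $e^{-lu}(u^2-1)^{r-\frac14}$ is bounded on $\mathbb{I}$ for $l>0$ and $r\ge\tfrac14$. Taking the supremum over $u$ then yields
\[
\lambda_{l,r,n}(\varphi\underset{\mu}{*}\psi)\le C\,\lambda_{0,m,p}(\varphi)\,\lambda_{0,m,q}(\psi)<\infty,
\]
and hence $\varphi\underset{\mu}{*}\psi\in\mathfrak{H}_{l,r,n}$.

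\textbf{Main obstacle.} The delicate point is Step 1, namely justifying the interchange of the $\eta$-integral with the $v,w$-integrals and the use of self-adjointness. Self-adjointness requires the boundary terms from integrating by parts in the divergence form \eqref{1.6} to vanish at $1$ and at $\infty$. I would handle this formally, as the paper does for \eqref{3.2}. If more care is needed, I would argue via the $\mu$MFT: $\mathfrak{MF_\mu}$ maps the convolution to a product of transforms, and $\mathfrak{MF_\mu}(A^k\varphi)=(-1)^k[(\mu+\tfrac12)^2+\eta^2]^k\,\mathfrak{MF_\mu}\varphi$. Applying the inversion \eqref{1.2} would then identify $A^n(\varphi\underset{\mu}{*}\psi)$ with $(A^p\varphi)\underset{\mu}{*}(A^q\psi)$. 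The decay supplied by the seminorm conditions with $\frac14<m<\frac34$ is what makes the relevant integrals absolutely convergent.
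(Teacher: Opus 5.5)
Your proposal is correct and follows the paper's proof essentially step for step: you split $A_u^n$ into $A_w^p$ and $A_v^q$ via \eqref{1.7} inside \eqref{1.10}, move them onto $\varphi$ and $\psi$ by self-adjointness to obtain \eqref{3.4}, and then bound the result with \eqref{3.3} and $\lambda_{0,m,p}(\varphi)\lambda_{0,m,q}(\psi)$, using that the two integrals $\int_1^\infty (w^2-1)^{-m-\frac14}\,dw$ converge exactly when $\frac14<m<\frac34$. Your endpoint check and your transform-side justification of the interchange are a bit more careful than the paper, which does Step 1 only formally; be aware, though, that both arguments rest on the quoted kernel bound \eqref{3.3}, which cannot hold for every $\mu$ (for $\mu=0$, \eqref{1.11} gives $K=\frac{1}{\pi}Y^{-1/2}$, which is unbounded as $Y\to 0^+$).
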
 
 \begin{proof}
 Using \eqref{1.13}, we have
 \begin{eqnarray*}
 A_u^n(\varphi\underset{\mu}{*} \psi(u))=\int\limits_1^\infty\int\limits_1^\infty A_u^nK(u,v,w)\varphi(w)\psi(v)dvdw.
 \end{eqnarray*}
 In view of \eqref{1.7} and \eqref{1.10}, we can write
 \begin{eqnarray*}
 A_u^n(\varphi\underset{\mu}{*} \psi(u))&=& \int\limits_1^\infty\int\limits_1^\infty \Bigg(\frac{1}{\pi}(-1)^n \int\limits_0^\infty \Big[\Big(\mu+\frac{1}{2} \Big)^2+\eta^2 \Big]^n \eta \sinh(\pi\eta)\Phi(\eta,\mu) 
\\\nonumber
&& \times P^{-\mu}_{\frac{-1}{2}+i\eta}(u)P^{-\mu}_{\frac{-1}{2}+i\eta}(v)P^{-\mu}_{\frac{-1}{2}+i\eta}(w)d\eta\Bigg)\varphi(w)\psi(v)dvdw
\\\nonumber
&=& \int\limits_1^\infty  \int\limits_1^\infty \Bigg(\frac{1}{\pi} \int\limits_0^\infty  \eta \sinh(\pi\eta)\Phi(\eta,\mu) P^{-\mu}_{\frac{-1}{2}+i\eta}(u)A_v^qP^{-\mu}_{\frac{-1}{2}+i\eta}(v)
\\
&& \times \big(A_w^p P^{-\mu}_{\frac{-1}{2}+i\eta}(w)\big) d\eta\Bigg)\varphi(w)\psi(v)dvdw.
\end{eqnarray*}
 Thus, in view of the self-adjoint property of the differential operator $A_w$, we get
\begin{eqnarray}
\nonumber
&&A_u^n(\varphi\underset{\mu}{*} \psi(u))
\\\nonumber
&&= \frac{1}{\pi}\int\limits_0^\infty P^{-\mu}_{\frac{-1}{2}+i\eta}(u) \Bigg(\int\limits_1^\infty \big(A_w^p P^{-\mu}_{\frac{-1}{2}+i\eta}(w)\big) \varphi(w)dw\Bigg)
\\\nonumber
&\quad&\quad \times \Bigg(\int\limits_1^\infty \big(A_v^q P^{-\mu}_{\frac{-1}{2}+i\eta}(v)\big) \psi(v)dv\Bigg)  \eta \sinh(\pi\eta)\Phi(\eta,\mu)  d\eta
\\\nonumber
&&=\int\limits_1^\infty\int\limits_1^\infty \Bigg(\frac{1}{\pi}\int\limits_0^\infty P^{-\mu}_{\frac{-1}{2}+i\eta}(u)P^{-\mu}_{\frac{-1}{2}+i\eta}(v) P^{-\mu}_{\frac{-1}{2}+i\eta}(w)\eta \sinh(\pi\eta)\Phi(\eta,\mu)  d\eta\Bigg) 
\\\nonumber
&\quad&\quad \times \big(A_w^p\varphi(w)\big)\big(A_v^q\psi(v)\big)dvdw 
\\
&&= \int\limits_1^\infty\int\limits_1^\infty K(u,v,w)\big(A_w^p\varphi(w)\big)\big(A_v^q\psi(v)\big)dvdw .\label{3.4}
\end{eqnarray}
In view of \eqref{3.1} and \eqref{3.3}, we may write
\begin{eqnarray*}
\sup\limits_{u\in \mathbb{I}}|e^{-lu}(u^2-1)^r A_u^n(\varphi\underset{\mu}{*} \psi(u))|&\leq& |C_\mu| \lambda_{0,m,p}(\varphi)\lambda_{0,m,q}(\psi) \sup\limits_{u\in \mathbb{I}}|e^{-lu}(u^2-1)^{r-\frac{1}{4}}|
\\
&\times& \int\limits_1^\infty (v^2-1)^{-m-\frac{1}{4}}dv\int\limits_1^\infty (w^2-1)^{-m-\frac{1}{4}}dw.
\end{eqnarray*}
For $r\geq \frac{1}{4},~l> 0$ given supremum at the right hand side is bounded. The involved integrals are convergent for $\frac{1}{4}<m<\frac{3}{4}$. Thus, we have
\begin{eqnarray*}
\lambda_{l,r,n}(\varphi\underset{\mu}{*} \psi)&\leq&C~ \lambda_{0,m,p}(\varphi)~\lambda_{0,m,q}(\psi) <\infty.
\end{eqnarray*}
Hence, the required result is obtained.

 \end{proof}
The continuity of the $\mu$MFT over function space analogous to $\mathfrak{H}_{l,r,n}$ was discussed earlier in \cite[p. 170]{AP}. Here, we discuss the continuity of the adjoint $\mu$MFT on the function space defined above. 

The adjoint $\mu$MFT is denoted by $\mathfrak{MF^*_\mu}$, is defined for an integrable function $\varphi$ by 
\begin{eqnarray}
\big(\mathfrak{MF^*_\mu}\varphi\big)(u)=\int_0^\infty \varphi(\eta) P_{-\frac{1}{2}+i\eta}^{-\mu}(u)  d\eta. \label{3.5}
\end{eqnarray} 
 \begin{theorem}
 If the adjoint $\mu$MFT is given as \eqref{3.5}, then it is continuous linear mapping from  $\mathfrak{H}_{0,m,0}$ into  $\mathfrak{H}_{l,r,n}$. 
 \end{theorem}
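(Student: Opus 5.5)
The plan is to mimic the proofs of the two preceding continuity theorems: move the operator $A_u^n$ inside the integral onto the kernel, use the eigenfunction relation \eqref{1.7}, and then estimate with a uniform bound on the Legendre function. Linearity of $\mathfrak{MF^*_\mu}$ is immediate from \eqref{3.5}. Continuity will follow from an estimate of the form $\lambda_{l,r,n}(\mathfrak{MF^*_\mu}\varphi)\le C\,\lambda_{0,m,0}(\varphi)$, where $\eta$ now plays the role of the variable of $\varphi$. Here I read $\lambda_{0,m,0}(\varphi)=\sup_\eta|(\eta^2-1)^m\varphi(\eta)|$, so that $\varphi$ decays like $\eta^{-2m}$.

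First, differentiate under the integral sign, which is justified by dominated convergence once the final bound is established. Then apply \eqref{1.7}:
\begin{eqnarray*}
A_u^n\big(\mathfrak{MF^*_\mu}\varphi\big)(u)=(-1)^n\int_0^\infty \Big[\Big(\mu+\frac12\Big)^2+\eta^2\Big]^n\varphi(\eta)P^{-\mu}_{-\frac12+i\eta}(u)\,d\eta .
\end{eqnarray*}
Next, bound the Legendre function uniformly in $\eta$ using \eqref{1.3}: $|P^{-\mu}_{-\frac12+i\eta}(u)|\le \frac{\Gamma(\frac12+\mathfrak R\mu)}{|\Gamma(\frac12+\mu)|}P^{-\mathfrak R\mu}_{-\frac12}(u)$. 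By \eqref{1.4} and \eqref{1.5}, the right side behaves like $(u-1)^{\mathfrak R\mu/2}$ near $1$ and like $u^{-1/2}\ln u$ at infinity. Consequently $e^{-lu}(u^2-1)^rP^{-\mathfrak R\mu}_{-\frac12}(u)$ is bounded on $\mathbb I$ for $l>0$ and $r\ge0$ (using $\mathfrak R\mu>-\frac12$ together with $r$ large enough near $u=1$). This gives
\begin{eqnarray*}
\lambda_{l,r,n}\big(\mathfrak{MF^*_\mu}\varphi\big)\le C_{\mu,l,r}\int_0^\infty \Big|\Big(\mu+\frac12\Big)^2+\eta^2\Big|^n|\varphi(\eta)|\,d\eta .
\end{eqnarray*}

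The main obstacle is controlling this $\eta$-integral by $\lambda_{0,m,0}(\varphi)$. The factor $[(\mu+\frac12)^2+\eta^2]^n$ grows like $\eta^{2n}$, so the seminorm $\lambda_{0,m,0}$, which involves no derivatives, must supply decay $\eta^{-2m}$ with $2m>2n+1$. My first step would be to split the integral at $\eta=2$. On $(0,2)$, I bound $\varphi$ directly by continuity; near $\eta=1$ the weight $(\eta^2-1)^{-m}$ is integrable only for $m<1$, so this region needs separate treatment, using local boundedness of $\varphi$. On $(2,\infty)$, I use $|\varphi(\eta)|\le \lambda_{0,m,0}(\varphi)(\eta^2-1)^{-m}$, which makes the tail integral finite provided $m>n+\frac12$. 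I would therefore state the result under this implicit restriction on $m$, or equivalently for each fixed $n$ with $m$ chosen accordingly. Combining the two pieces gives $\lambda_{l,r,n}(\mathfrak{MF^*_\mu}\varphi)\le C\lambda_{0,m,0}(\varphi)$, and continuity on these countably normed spaces follows.
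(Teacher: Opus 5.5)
You follow the paper's route: push $A_u^n$ onto the kernel via \eqref{1.7}, control the $u$-supremum with \eqref{1.3}--\eqref{1.5}, and reduce everything to the $\eta$-integral. The paper writes the bound $\lambda_{0,m,0}(\varphi)\int_0^\infty\bigl|[(\mu+\frac12)^2+\eta^2]^n\bigr|(\eta^2-1)^{-m}\,d\eta$ and asserts convergence for $n-m>\frac12$. You correctly see that the tail actually needs $m>n+\frac12$, and that $\eta=1$ is a problem. The step you use near $\eta=1$ is where the argument breaks. Bounding $\varphi$ on $(0,2)$ ``by continuity'' or ``local boundedness'' gives a constant depending on $\sup_{(0,2)}|\varphi|$. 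That quantity is not controlled by $\lambda_{0,m,0}(\varphi)=\sup_\eta|\eta^2-1|^m|\varphi(\eta)|$, because the weight vanishes at $\eta=1$. So you never obtain $\lambda_{l,r,n}(\mathfrak{MF^*_\mu}\varphi)\le C\lambda_{0,m,0}(\varphi)$ with $C$ independent of $\varphi$, which is exactly what continuity requires.

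This cannot be patched under the stated hypotheses. For $n\ge1$ your tail condition forces $m>\frac32$, and for every $m\ge1$ the desired estimate is false. To see this, take $\mu$ real and $u_0$ with $P^{-\mu}_{-\frac12+i}(u_0)\neq0$. Choose $\delta\in(0,1)$ so that $P^{-\mu}_{-\frac12+i\eta}(u_0)$ keeps one sign and has modulus $\ge c>0$ for $|\eta-1|\le\delta$; this is possible because it is real and continuous in $\eta$. Let $\varphi_k=\min\{k,|\eta^2-1|^{-m}\}$ on $[1-\delta,1+\delta]$ and $0$ elsewhere (or continuous versions). Then $\lambda_{0,m,0}(\varphi_k)\le 1$, yet $|A_u^n(\mathfrak{MF^*_\mu}\varphi_k)(u_0)|\ge c\,(1-\delta)^{2n}\int\varphi_k\to\infty$. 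For the limit function $|\eta^2-1|^{-m}$ on $[1-\delta,1+\delta]$, the integral \eqref{3.5} diverges at $u_0$, so the operator is not even defined on all of $\mathfrak{H}_{0,m,0}$.

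Within this method, integrability at $\eta=1$ needs $m<1$ and integrability at infinity needs $m>n+\frac12$. These are compatible only for $n=0$ and $\frac12<m<1$, and then no splitting is needed. The paper's own proof has the same defect, plus the reversed inequality. To get a correct theorem you must change the hypothesis, for instance replace the weight $(\eta^2-1)^m$ by $(1+\eta^2)^m$ on $(0,\infty)$. Then your argument works without any splitting for $m>n+\frac12$, $l>0$, $r\ge\max\{0,-\frac{\mathfrak{R}\mu}{2}\}$, modulo justifying the interchange of $A_u^n$ with the integral.
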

 \begin{proof}
 In view of \eqref{3.1}, \eqref{3.5}, \eqref{1.7}, and \eqref{1.3}, we obtain
 \begin{eqnarray*}
 \lambda_{l,r,n}\big(\mathfrak{MF^*_\mu}\varphi\big)
  &\leq& \sup\limits_{u\in \mathbb{I}}|e^{-lu}(u^2-1)^r P^{-\mathfrak{R}\mu}_{-\frac{1}{2}}(u)|~ \lambda_{0,m,0}(\varphi)\\
 & \quad& \times \int_0^\infty  \left|\Big[\Big(\mu+\frac{1}{2} \Big)^2+\eta^2 \Big]^n\right| (\eta^2-1)^{-m} d\eta.
 \end{eqnarray*}
By \eqref{1.4} and \eqref{1.5}, the supremum in the right hand side of the inequality is finite for $l>0$, $r\geq 0$ and $\mu\geq0$. Moreover, the integral involved converges for $n-m>\frac{1}{2}$. Therefore
 \begin{eqnarray*}
 \lambda_{l,r,n}\big(\mathfrak{MF^*_\mu}\varphi\big) \leq C~  \lambda_{0,m,m}(\varphi)<\infty.
\end{eqnarray*}  
Hence, the result follows. 
 \end{proof}
 \section{Continuity over Sobolev type space}
In \cite{rsp}, the Sobolev space $H_{m,p}$ was studied in terms of the differential operator $D^\alpha$. The sobolev type spaces $\mathbb{S}^{N}_{2}$ and $\mathbb{S}^{N,\alpha}_{p}$ associated with the differential operator $A_x$ corresponding to the Kontorovich-Lebedev transform, were introduced and studied in \cite{sbysobo1}, and \cite{sbysobo2} respectively. For further discussion of such spaces and related works, we refer the reader to \cite{sobolev1,sobolev2,sobolev3,sobolev4,sobolev5,sobolev6}.  Inspired by these studies, we introduce a Sobolev-type space defined in terms of the differential operator $A_u$ given in \eqref{1.6}.
\begin{definition}(\textbf{Sobolev type space $\mathbb{S}^{N,\alpha}_{r,p}(\mathbb{I})$}) 
We define the Sobolev-type space $\mathbb{S}^{N,\alpha}_{r,p}(\mathbb{I})$ as the collection of all functions $\varphi$ such that $A^n_u\varphi(u)\in L^p(\mathbb{I};~e^{-\alpha u}(u^2-1)^rdu),$ where $~\alpha\in \mathbb{R}^0_+=[0,\infty), ~n=0, 1, ...,N, ~$ $r\geq 0, 1\leq p<\infty$. The space $\mathbb{S}^{N,\alpha}_{r,p}(\mathbb{I})$ is equipped with the norm 
\begin{eqnarray}
\|\varphi\|_{N,\alpha,r,p}&=& \Bigg(\sum\limits_{n=0}^N\int\limits_1^\infty |A_u^n\varphi(u)|^pe^{-\alpha u}(u^2-1)^rdu\Bigg)^\frac{1}{p}<\infty,\label{4.1}
\end{eqnarray}
and  the space $\mathbb{S}^{N,\alpha}_{r,\infty}(\mathbb{I})$ is equipped with the norm 
\begin{eqnarray}
\|\varphi\|_{N,\alpha,r,\infty}&=& \sum\limits_{n=0}^N \|A_{(\cdot)}^n\varphi\|_{L_\infty(\mathbb{I};~e^{-\alpha u}(u^2-1)^r du)}<\infty.\label{4.2}
\end{eqnarray}
\end{definition}  
Now, we study various continuity properties in this Sobolev space.
\begin{theorem}
If we consider the function $\varphi\in L^1(\mathbb{I}; (w^2-1)^{\frac{-1}{4}}dw)$, then the translation operation $_\mu\mathfrak{T}_{(\cdot)}\varphi$ continuously maps into $ \mathbb{S}^{N,\alpha}_{r,p}(\mathbb{I}) $, where $r\geq \frac{p}{4}$ and $1\leq p<\infty$.
\end{theorem}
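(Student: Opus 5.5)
The plan is to follow the pattern of the continuity theorem for $_\mu\mathfrak{T}_u$ on $\mathfrak{H}_{0,m,n}$. That means first moving the powers of $A_u$ from the translated function onto $\varphi$, and then bounding with the kernel estimate \eqref{3.3}.

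Concretely, for fixed $v\in\mathbb{I}$ we regard $u\mapsto({}_\mu\mathfrak{T}_u\varphi)(v)$ as the function whose Sobolev norm \eqref{4.1} is to be estimated. By \eqref{3.2}, for each $n=0,1,\dots,N$,
\begin{eqnarray*}
A_u^n({}_\mu\mathfrak{T}_u\varphi)(v)=\int_1^\infty K(u,v,w)\,A_w^n\varphi(w)\,dw .
\end{eqnarray*}
Since the hypothesis only places $\varphi$ itself in $L^1(\mathbb{I};(w^2-1)^{-1/4}dw)$, I would instead keep the derivatives on the kernel side, which is also the first expression in \eqref{3.2}. Using the spectral representation \eqref{1.10} together with \eqref{1.7}, $A_u^nK(u,v,w)$ differs from $K$ only by the factor $(-1)^n[(\mu+\frac12)^2+\eta^2]^n$ inside the $\eta$-integral. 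I would then invoke the bound \eqref{3.3} in the form
\begin{eqnarray*}
|A_u^nK(u,v,w)|\le |C_{\mu,n}|\,[(u^2-1)(v^2-1)(w^2-1)]^{-\frac14},
\end{eqnarray*}
interpreting \eqref{3.3} from \cite{AP} as holding for the kernel with polynomial spectral weight. With this bound,
\begin{eqnarray*}
|A_u^n({}_\mu\mathfrak{T}_u\varphi)(v)|\le |C_{\mu,n}|(u^2-1)^{-\frac14}(v^2-1)^{-\frac14}\,\|\varphi\|_{L^1(\mathbb{I};(w^2-1)^{-1/4}dw)}.
\end{eqnarray*}

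Next I raise this to the $p$-th power, multiply by $e^{-\alpha u}(u^2-1)^r$, integrate over $\mathbb{I}$ and sum over $n\le N$. The $u$-integral is
\begin{eqnarray*}
\int_1^\infty e^{-\alpha u}(u^2-1)^{r-\frac p4}\,du .
\end{eqnarray*}
Near $u=1$ it converges because $r\ge p/4$ makes the exponent nonnegative. At infinity it converges provided $\alpha>0$, which I would state explicitly as an assumption since the statement only gives $\alpha\ge 0$. For fixed $v\ge\delta>1$, this yields
\begin{eqnarray*}
\|{}_\mu\mathfrak{T}_{(\cdot)}\varphi\|_{N,\alpha,r,p}\le C\,\|\varphi\|_{L^1(\mathbb{I};(w^2-1)^{-1/4}dw)} .
\end{eqnarray*}
Linearity of ${}_\mu\mathfrak{T}$ then gives continuity.

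The main obstacle is justifying the uniform bound on $A_u^nK$. The estimate \eqref{3.3} is only stated for $K$ itself, and the extra factor $[(\mu+\frac12)^2+\eta^2]^n$ spoils absolute convergence of the $\eta$-integral. My first attempt would be to apply the explicit formula \eqref{1.11} together with the expansion \eqref{1.8}, computing derivatives of $Y^{\mu-\frac12}$ on the region $Y\ge\epsilon$. If that becomes unwieldy, I would fall back to moving $A^n$ onto $\varphi$ as in \eqref{3.2}, adding the hypothesis $A^n\varphi\in L^1(\mathbb{I};(w^2-1)^{-1/4}dw)$ for $n\le N$.
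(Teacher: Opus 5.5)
Your outline follows the paper's proof: bound the kernel by \eqref{3.3}, pull out $(u^2-1)^{-\frac14}(v^2-1)^{-\frac14}\|\varphi\|_{L^1(\mathbb{I};(w^2-1)^{-1/4}dw)}$, and use convergence of $\int_1^\infty e^{-\alpha u}(u^2-1)^{r-\frac p4}du$ for $\alpha>0$, $r\ge\frac p4$, with $v$ fixed. But the step everything rests on is a genuine gap. The bound $|A_u^nK(u,v,w)|\le|C_{\mu,n}|[(u^2-1)(v^2-1)(w^2-1)]^{-\frac14}$ does not follow from \eqref{3.3}, and for $n\ge1$ it is false in general. By \eqref{1.11}, for fixed $v,w$ the kernel is supported in $u_-<u<u_+$ with $u_\pm=vw\pm\sqrt{(v^2-1)(w^2-1)}$, where $Y=(u_+-u)(u-u_-)$. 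Hence $K\sim c\,(u_+-u)^{\mu-\frac12}$ as $u\to u_+^-$, with $c\neq0$. Since $A_u$ is of second order with leading coefficient $u^2-1$, each application lowers this exponent by $2$. Inside the support, therefore, $A_u^nK\sim c_n(u_+-u)^{\mu-\frac12-2n}$, with $c_n\neq0$ for generic $\mu$. This is unbounded as soon as $2n>\mathfrak{R}\mu-\frac12$, and across $Y=0$ the object $A_u^nK$ is in general only a distribution. Working on $Y\ge\epsilon$ does not help, because the $w$-integral in \eqref{1.12} runs right up to the edge $Y=0$. The loss of absolute convergence you noticed in the $\eta$-integral of \eqref{1.10} is the spectral-side symptom of exactly this: for fixed $u,v,w$ its integrand has size $\eta^{2n-\mathfrak{R}\mu-\frac12}$, and the thresholds match.

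You will not find the missing ingredient in the paper. Its proof writes $A_u^n({}_\mu\mathfrak{T}_u\varphi)(v)=(-1)^n[(\mu+\frac12)^2+\eta^2]^n\int_1^\infty K(u,v,w)\varphi(w)\,dw$, i.e.\ it pulls the $\eta$-dependent eigenvalue out of the $\eta$-integral that defines $K$ and then treats $\eta$ as a fixed parameter. That is the same unjustified step in another guise. Under the stated hypothesis alone the gap cannot be closed. Take $\varphi=\mathbf{1}_{[a,b]}$ and a generic $v\in(1,b)$, and let $u_0$ be the point where the upper edge $uv+\sqrt{(u^2-1)(v^2-1)}$ of the support of $K(u,v,\cdot)$ reaches $b$. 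Near $u_0$, the function $u\mapsto({}_\mu\mathfrak{T}_u\varphi)(v)$ equals a smooth function minus $(u-u_0)_+^{\mu+\frac12}$ times a smooth factor nonzero at $u_0$. So for generic $\mu$ its $A_u^n$ behaves like $(u-u_0)_+^{\mu+\frac12-2n}$, which is not locally $p$-integrable once $p(\mathfrak{R}\mu+\frac12-2n)\le-1$. Already $\mu=0$, $N=1$ fails for every $p\ge1$. Your fallback is the right repair: move $A^n$ onto $\varphi$ via \eqref{3.2}, as the paper itself does in \eqref{4.3} for a later theorem, and assume $A_w^n\varphi\in L^1(\mathbb{I};(w^2-1)^{-\frac14}dw)$ for $n\le N$. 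Your estimate then goes through verbatim with $\sum_{n\le N}\|A_w^n\varphi\|_{L^1(\mathbb{I};(w^2-1)^{-1/4}dw)}$ on the right-hand side. That, however, proves a result with a strictly stronger hypothesis than the one stated.
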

\begin{proof}
Using \eqref{1.12}, \eqref{1.7} and \eqref{1.10}, we get
\begin{eqnarray*}
A_u^n(_\mu\mathfrak{T}_u\varphi(v))=(-1)^n\Bigg[\Big(\mu+\frac{1}{2}\Big)^2+\eta^2\Bigg]^n\int\limits_1^\infty K(u,v,w)\varphi(w)dw.
\end{eqnarray*}
Using \eqref{3.3}, we can write
\begin{eqnarray*}
|A_u^n(_\mu\mathfrak{T}_u\varphi(v))|^p
&\leq& |C_\mu|^p \Bigg[\Big(\mu+\frac{1}{2}\Big)^2+\eta^2\Bigg]^{pn}(u^2-1)^{\frac{-p}{4}}(v^2-1)^{\frac{-p}{4}} 
\\
&\quad& \times\|\varphi\|^p_{L^1(\mathbb{I};~(w^2-1)^{\frac{-1}{4}}dw)}.
\end{eqnarray*}
In view of \eqref{4.1}, we have
\begin{eqnarray*}
\|_\mu\mathfrak{T}_u\varphi\|_{N,\alpha,r,p} &\leq& |C_\mu| \sum\limits_{n=0}^{N}\Bigg[\Big(\mu+\frac{1}{2}\Big)^2+\eta^2\Bigg]^{n}\|\varphi\|_{L^1(\mathbb{I};~(w^2-1)^{\frac{-1}{4}}dw)}
\\
&\quad& \times (v^2-1)^{\frac{-1}{4}}\Big(\int\limits_1^\infty e^{-\alpha u}(u^2-1)^{r-\frac{p}{4}}du\Big)^{\frac{1}{p}}.
\end{eqnarray*}
Now, involved integral is convergent for $\alpha >0,~r\geq \frac{p}{4}, ~1\leq p<\infty$ and also for some fixed $\eta>0, ~v>1$ and constant $C>0$, we can write
\begin{eqnarray*}
\|_\mu\mathfrak{T}_u\varphi\|_{N,\alpha,r,p} &\leq& C ~ \|\varphi\|_{L^1(\mathbb{I};~(w^2-1)^{\frac{-1}{4}}dw)}<\infty.
\end{eqnarray*}
Hence, the translation operator maps into $\mathbb{S}^{N,\alpha}_{r,p}(\mathbb{I})$.
\end{proof}
\begin{theorem}
If we consider the function $\varphi\in L^1(\mathbb{I};~(w^2-1)^{\frac{-1}{4}}dw)$, then the translation operator $_\mu\mathfrak{T}_{(\cdot)}\varphi$ continuously maps into $ \mathbb{S}^{N,\alpha}_{r,\infty}(\mathbb{I}) $, where $r\geq \frac{1}{4}$.
\end{theorem}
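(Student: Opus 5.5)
The plan is to follow the proof of the preceding theorem (the $L^p$ case) and replace the $L^p$ integration in $u$ by an essential supremum over $u$. Fix $v\in\mathbb{I}$ (say $v\geq\delta>1$) and, as in the previous proof, fix $\eta>0$. The first step is to compute $A^n_u$ applied to $({}_\mu\mathfrak{T}_u\varphi)(v)$ for each $n=0,1,\dots,N$. Starting from \eqref{1.12} and the integral representation \eqref{1.10} of $K(u,v,w)$, I would let $A_u^n$ act on the factor $P^{-\mu}_{-\frac12+i\eta}(u)$. By \eqref{1.7} this produces the eigenvalue factor $(-1)^n\big[(\mu+\frac12)^2+\eta^2\big]^n$. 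As in the previous theorem, this gives
\begin{eqnarray*}
A_u^n\big({}_\mu\mathfrak{T}_u\varphi\big)(v)=(-1)^n\Big[\Big(\mu+\tfrac12\Big)^2+\eta^2\Big]^n\int_1^\infty K(u,v,w)\varphi(w)\,dw .
\end{eqnarray*}

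The second step is a pointwise bound. Using the kernel estimate \eqref{3.3}, I would obtain
\begin{eqnarray*}
\big|A_u^n\big({}_\mu\mathfrak{T}_u\varphi\big)(v)\big|\leq |C_\mu|\,\Big|\Big(\mu+\tfrac12\Big)^2+\eta^2\Big|^n (u^2-1)^{-\frac14}(v^2-1)^{-\frac14}\,\|\varphi\|_{L^1(\mathbb{I};(w^2-1)^{-\frac14}dw)} .
\end{eqnarray*}
Multiplying by the weight $e^{-\alpha u}(u^2-1)^r$ and taking the supremum over $u\in\mathbb{I}$ leaves the factor $\sup_{u\in\mathbb{I}} e^{-\alpha u}(u^2-1)^{r-\frac14}$.

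The third step is to check that this supremum is finite. Near $u=1$ it is finite precisely because $r\geq\frac14$. As $u\to\infty$ it is finite when $\alpha>0$, or when $\alpha=0$ and $r=\frac14$. This is the delicate point, and I expect it to be the main obstacle: the hypotheses only state $r\geq\frac14$. I would resolve it as the previous proof did, by taking $\alpha>0$ (or else restricting to $r=\frac14$ when $\alpha=0$), and I would state this explicitly.

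Finally, I would sum over $n=0,\dots,N$ according to \eqref{4.2}. The factors $(v^2-1)^{-\frac14}$ and $\sum_{n=0}^N\big|(\mu+\frac12)^2+\eta^2\big|^n$ are constants because $v$ and $\eta$ are fixed. This yields
\begin{eqnarray*}
\|{}_\mu\mathfrak{T}_{(\cdot)}\varphi\|_{N,\alpha,r,\infty}\leq C\,\|\varphi\|_{L^1(\mathbb{I};(w^2-1)^{-\frac14}dw)} .
\end{eqnarray*}
Since the translation operator is linear in $\varphi$, this bound gives the continuity of the map into $\mathbb{S}^{N,\alpha}_{r,\infty}(\mathbb{I})$.
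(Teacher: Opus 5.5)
\textbf{The gap is in your first step, and it cannot be repaired under the stated hypothesis.}

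In \eqref{1.10} the variable $\eta$ is integrated out. So when $A_u^n$ falls on $P^{-\mu}_{-\frac12+i\eta}(u)$, the factor $(-1)^n\big[(\mu+\frac12)^2+\eta^2\big]^n$ supplied by \eqref{1.7} stays \emph{inside} the $\eta$-integral. Thus $A_u^nK(u,v,w)$ is (formally) the integral \eqref{1.10} with this extra polynomial weight, and it is not a constant multiple of $K(u,v,w)$. There is no $\eta$ to ``fix'': $A_u^n({}_\mu\mathfrak{T}_u\varphi)(v)$ does not depend on any $\eta$, while your right-hand side does. Consequently \eqref{3.3} gives no information about the kernel that actually occurs. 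This is the same step as in the paper's proof of the $L^p$ version, and the paper's one-line proof of this statement defers to it, so following the paper does not make it valid. The legitimate way to use \eqref{1.7} is to move $A^n$ onto $\varphi$ as in \eqref{3.2}, obtaining $\int_1^\infty K(u,v,w)A_w^n\varphi(w)\,dw$. But this requires control of $A_w^n\varphi$, which $\varphi\in L^1(\mathbb{I};(w^2-1)^{-\frac14}dw)$ does not give.

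\textbf{The claim fails for $N\geq1$.} Take $\mu=\frac12$. Since $Y=(u^2-1)(v^2-1)-(w-uv)^2$, \eqref{1.11} gives
$K(u,v,w)=(2\pi)^{-\frac12}[(u^2-1)(v^2-1)(w^2-1)]^{-\frac14}$ for $w_-(u)<w<w_+(u)$, where $w_\pm(u)=uv\pm\sqrt{(u^2-1)(v^2-1)}$, and $K=0$ otherwise. Let $v=\frac32$ and $\varphi=\chi_{(1,2)}$, and let $u_0$ be the point where $w_+(u_0)=2$. Near $u_0$ the translate is
$(2\pi)^{-\frac12}[(u^2-1)(v^2-1)]^{-\frac14}\int_{w_-(u)}^{\min\{w_+(u),2\}}(w^2-1)^{-\frac14}dw$.
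Its $u$-derivative has a jump at $u_0$, proportional to $-3^{-\frac14}w_+'(u_0)\neq0$. Hence $A_u$ of the translate contains a Dirac mass at $u_0$ and belongs to no weighted $L^\infty$ space.

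\textbf{What your argument does prove is the case $N=0$.} There your second and third steps are fine, with two restrictions added. The first is $\alpha>0$ (or $r=\frac14$), as you correctly noticed. The second is $\mathfrak{R}\mu\geq\frac12$: for $\mathfrak{R}\mu<\frac12$ the factor $Y^{\mu-\frac12}$ in \eqref{1.11} is unbounded as $Y\to0^+$, so \eqref{3.3} cannot hold.
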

\begin{proof}
By proceeding as above result can be obtained.
\end{proof}
\begin{theorem}
If we consider the functions $\varphi\in L^1(\mathbb{I};~(w^2-1)^{\frac{-1}{4}}dw)$ and $\psi\in L^1(\mathbb{I};~(v^2-1)^{\frac{-1}{4}}dv)$, then $\varphi\underset{\mu}{*}\psi\in \mathbb{S}^{N,\alpha}_{r,p}(\mathbb{I}) $, where $~r\geq \frac{p}{4}, ~1\leq p<\infty$.
\end{theorem}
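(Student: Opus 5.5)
We follow the same pattern as the two preceding translation theorems, now applied to the double integral \eqref{1.13}. The aim is a bound of the form
\begin{eqnarray*}
\|\varphi\underset{\mu}{*}\psi\|_{N,\alpha,r,p}\leq C\,\|\varphi\|_{L^1(\mathbb{I};~(w^2-1)^{\frac{-1}{4}}dw)}\,\|\psi\|_{L^1(\mathbb{I};~(v^2-1)^{\frac{-1}{4}}dv)}.
\end{eqnarray*}

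\textbf{Step 1: move $A_u^n$ onto the kernel.} Starting from \eqref{1.13}, we let $A_u^n$ act on $K(u,v,w)$ under the double integral. Inserting the integral representation \eqref{1.10} and applying \eqref{1.7} to the factor $P^{-\mu}_{-\frac{1}{2}+i\eta}(u)$ produces the multiplier $(-1)^n\big[(\mu+\frac{1}{2})^2+\eta^2\big]^n$. As in the proof of the preceding Sobolev-space theorem for $_\mu\mathfrak{T}_u$, we treat $\eta$ as fixed, which yields
\begin{eqnarray*}
A_u^n(\varphi\underset{\mu}{*}\psi)(u)=(-1)^n\Big[\Big(\mu+\frac{1}{2}\Big)^2+\eta^2\Big]^n\int\limits_1^\infty\int\limits_1^\infty K(u,v,w)\varphi(w)\psi(v)\,dv\,dw.
\end{eqnarray*}

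\textbf{Step 2: pointwise bound via \eqref{3.3}.} The kernel estimate \eqref{3.3} factorises in $u$, $v$, $w$. Using it, we get
\begin{eqnarray*}
|A_u^n(\varphi\underset{\mu}{*}\psi)(u)|\leq |C_\mu|\Big|\Big(\mu+\frac{1}{2}\Big)^2+\eta^2\Big|^n(u^2-1)^{-\frac{1}{4}}\,\|\varphi\|_{L^1(\mathbb{I};~(w^2-1)^{\frac{-1}{4}}dw)}\,\|\psi\|_{L^1(\mathbb{I};~(v^2-1)^{\frac{-1}{4}}dv)}.
\end{eqnarray*}
Because of this factorisation, the $v$- and $w$-integrals separate into exactly the weighted $L^1$ norms assumed in the hypothesis. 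Unlike the translation case, no factor $(v^2-1)^{-\frac{1}{4}}$ survives.

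\textbf{Step 3: insert into the norm \eqref{4.1}.} We raise the bound to the $p$-th power, multiply by $e^{-\alpha u}(u^2-1)^r$, integrate over $\mathbb{I}$, and sum over $n=0,\dots,N$. The only remaining integral is
\begin{eqnarray*}
\int\limits_1^\infty e^{-\alpha u}(u^2-1)^{r-\frac{p}{4}}\,du,
\end{eqnarray*}
which is finite when $\alpha>0$ and $r\geq\frac{p}{4}$. Collecting the constants (the finite sum of powers of $[(\mu+\frac{1}{2})^2+\eta^2]$, together with $|C_\mu|$ and this integral) into $C$ gives the desired bound. Bilinearity of the convolution then gives continuity in each argument.

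\textbf{Main obstacle.} The delicate point is Step 1. Justifying the interchange of $A_u^n$ with the integrals, and hence with the $\eta$-integral in \eqref{1.10}, requires the absolute convergence that the paper takes for granted. Moreover, "fixing $\eta$" is a heuristic: the eigenvalue factor really sits inside the $\eta$-integral. I would follow the paper's convention here, as in the previous theorem, rather than attempt a rigorous spectral argument. A more honest alternative, if needed, is to differentiate $K$ directly via \eqref{1.8}.
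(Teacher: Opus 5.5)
Your three steps reproduce the paper's own proof essentially line for line, including treating $\eta$ as fixed in Step 1. That device is not a heuristic awaiting justification; it is a false identity, and it is where the argument (yours and the paper's) breaks. The left side $A_u^n(\varphi\underset{\mu}{*}\psi)(u)$ cannot depend on $\eta$, which is a bound variable in \eqref{1.10}. What \eqref{1.10} and \eqref{1.7} give, formally, is
\begin{eqnarray*}
A_u^nK(u,v,w)&=&\frac{(-1)^n}{\pi}\int\limits_0^\infty\Big[\Big(\mu+\frac{1}{2}\Big)^2+\eta^2\Big]^n\eta\sinh(\pi\eta)\Phi(\eta,\mu)\\
&&\times P^{-\mu}_{-\frac{1}{2}+i\eta}(u)P^{-\mu}_{-\frac{1}{2}+i\eta}(v)P^{-\mu}_{-\frac{1}{2}+i\eta}(w)\,d\eta .
\end{eqnarray*}
This is not a multiple of $K$, and since the multiplier grows like $\eta^{2n}$, the pointwise bound \eqref{3.3} on $K$ says nothing about $A_u^nK$. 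In fact $K$ has only finite smoothness across the surface $Y=0$ (it behaves like $Y_+^{\mu-\frac{1}{2}}$ there). So for large $n$ the distribution $A_u^nK$ is not a locally integrable function, and its double integral against $\varphi(w)\psi(v)$ has no meaning for general $L^1$ data. Your fallback of differentiating $K$ via \eqref{1.8} would expose this singularity rather than remove it. Steps 2 and 3 therefore control only the $n=0$ term of \eqref{4.1}. Separately, \eqref{3.3} itself needs $\mathfrak{R}\mu\geq\frac{1}{2}$: by \eqref{1.11}, $|K|$ blows up like $Y^{\mathfrak{R}\mu-\frac{1}{2}}$ as $Y\to0^+$ with $u,v,w$ bounded away from $1$, while the right side of \eqref{3.3} stays bounded.

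This cannot be fixed under the stated hypotheses, because the statement is false as soon as $N\geq2$, already for $\mu=\frac{1}{2}$. There \eqref{1.11} gives $K=(2\pi)^{-\frac{1}{2}}[(u^2-1)(v^2-1)(w^2-1)]^{-\frac{1}{4}}$ on $\{Y>0\}$, so \eqref{3.3} holds. Take $\varphi=\psi=\chi_{[2,3]}$. Since $Y(u,3,3)=-(u-1)(u-17)$, for $u$ slightly below $17$ the set $\{(v,w)\in[2,3]^2:Y>0\}$ is a small curvilinear triangle at the corner $(3,3)$, of size proportional to $17-u$. For $u\geq17$ the set is empty. Hence $\varphi\underset{\mu}{*}\psi$ vanishes for $u\geq17$, while for $u<17$ it is smooth up to $u=17$ and behaves like $c_0(17-u)^2$ with $c_0>0$. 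Its second derivative therefore jumps at $u=17$. Understood distributionally, as a Sobolev space requires, $A_u^2(\varphi\underset{\mu}{*}\psi)$ contains a nonzero multiple of $\delta'$ at $u=17$ and lies in no $L^p$.

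The missing ingredient is a hypothesis on the $A$-derivatives of the data, together with moving $A_u^n$ onto them. Suppose, for instance, that $A_w^k\varphi\in L^1(\mathbb{I};~(w^2-1)^{\frac{-1}{4}}dw)$ for $0\leq k\leq N$ and that the boundary terms in the integration by parts vanish. Then the symmetry $A_uK=A_wK$ and the self-adjointness of $A_w$ give, exactly as in \eqref{3.2} and \eqref{3.4},
\begin{eqnarray*}
A_u^n(\varphi\underset{\mu}{*}\psi)(u)=\int\limits_1^\infty\int\limits_1^\infty K(u,v,w)\,(A_w^n\varphi)(w)\,\psi(v)\,dv\,dw .
\end{eqnarray*}
Your Steps 2 and 3 then go through verbatim, for $\mathfrak{R}\mu\geq\frac{1}{2}$, with $\|A^n_{(\cdot)}\varphi\|_{L^1(\mathbb{I};~(w^2-1)^{\frac{-1}{4}}dw)}$ in place of $\|\varphi\|_{L^1(\mathbb{I};~(w^2-1)^{\frac{-1}{4}}dw)}$.
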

\begin{proof}
Using \eqref{1.13}, \eqref{1.7} and \eqref{1.10}, we get
\begin{eqnarray*}
A_u^n(\varphi\underset{\mu}{*}\psi(u))=(-1)^n\Bigg[\Big(\mu+\frac{1}{2}\Big)^2+\eta^2\Bigg]^n\int\limits_1^\infty K(u,v,w)\varphi(w)\psi(v)dv dw.
\end{eqnarray*}
In view of \eqref{4.1} and \eqref{3.3}, we have
\begin{eqnarray*}
\|\varphi\underset{\mu}{*}\psi\|_{N,\alpha,r,p}&\leq& ~|C_\mu|\sum\limits_{n=0}^N\left|\Bigg[\Big(\mu+\frac{1}{2}\Big)^2+\eta^2\Bigg]^n\right| 
\|\varphi\|_{L^1(\mathbb{I};~(w^2-1)^{\frac{-1}{4}}dw)}
\\
&\times&\|\psi\|_{L^1(\mathbb{I};~(v^2-1)^{\frac{-1}{4}}dv)}
\int\limits_1^\infty (u^2-1)^{r-\frac{p}{4}}e^{-\alpha u}du.
\end{eqnarray*}
Involved integral is convergent for $\alpha>0, ~r\geq \frac{p}{4}, ~1\leq p<\infty$ and also for some fixed $\eta>0$ and constant $C>0$, we can write
\begin{eqnarray*}
\|\varphi\underset{\mu}{*}\psi\|_{N,\alpha,r,p}&\leq& ~C~\|\varphi\|_{L^1(\mathbb{I};~(w^2-1)^{\frac{-1}{4}}dw)}
\|\psi\|_{L^1(\mathbb{I};~(v^2-1)^{\frac{-1}{4}}dv)}<\infty.
\end{eqnarray*}
Hence, the convolution operator maps into $\mathbb{S}^{N,\alpha}_{r,p}(\mathbb{I})$.
\end{proof}
\begin{theorem}
If we consider the functions $\varphi\in L^1(\mathbb{I};(w^2-1)^{\frac{-1}{4}}dw)$ and $\psi\in L^1(\mathbb{I};~(v^2-1)^{\frac{-1}{4}}dv)$, then $\varphi\underset{\mu}{*}\psi\in \mathbb{S}^{N,\alpha}_{r,\infty}(\mathbb{I}) $, where $~r\geq \frac{1}{4}$.
\end{theorem}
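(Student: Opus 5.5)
The plan is to repeat the argument of the preceding $L^p$ theorem, replacing the integral norm \eqref{4.1} by the essential-supremum norm \eqref{4.2}.

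First, apply $A_u^n$ to \eqref{1.13}. The operator acts only on the kernel, and \eqref{1.10} together with \eqref{1.7} converts $A_u^n$ into the factor $(-1)^n[(\mu+\frac12)^2+\eta^2]^n$, exactly as in the proof of the previous theorem. This gives
\begin{eqnarray*}
A_u^n(\varphi\underset{\mu}{*}\psi)(u)=(-1)^n\Big[\Big(\mu+\frac12\Big)^2+\eta^2\Big]^n\int\limits_1^\infty\int\limits_1^\infty K(u,v,w)\varphi(w)\psi(v)\,dv\,dw,
\end{eqnarray*}
with $\eta>0$ held fixed, as the paper does.

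Next, insert the kernel bound \eqref{3.3}, namely $|K(u,v,w)|\le |C_\mu|[(u^2-1)(v^2-1)(w^2-1)]^{-1/4}$. The double integral then splits into $\|\varphi\|_{L^1(\mathbb{I};(w^2-1)^{-1/4}dw)}\,\|\psi\|_{L^1(\mathbb{I};(v^2-1)^{-1/4}dv)}$, and what remains is a factor $(u^2-1)^{-1/4}$. Multiplying by the weight in \eqref{4.2} and taking the supremum over $u\in\mathbb{I}$ gives, for each $n$,
\begin{eqnarray*}
\|A^n_{(\cdot)}(\varphi\underset{\mu}{*}\psi)\|_{L_\infty}\le |C_\mu|\,\Big|\Big[\Big(\mu+\frac12\Big)^2+\eta^2\Big]^n\Big|\,\sup_{u\in\mathbb{I}} e^{-\alpha u}(u^2-1)^{r-\frac14}\,\|\varphi\|\,\|\psi\|.
\end{eqnarray*}

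The main point to check is that this supremum is finite. Near $u=1^+$ it is bounded precisely because $r\ge\frac14$. As $u\to\infty$ the factor $e^{-\alpha u}$ controls the polynomial growth, provided $\alpha>0$; I would state this assumption explicitly, just as the $L^p$ case needed $\alpha>0$. If $\alpha=0$, one needs $r=\frac14$ exactly, and then the supremum equals $1$.

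Finally, sum over $n=0,\dots,N$ and absorb the constants into a single $C>0$. This yields
\begin{eqnarray*}
\|\varphi\underset{\mu}{*}\psi\|_{N,\alpha,r,\infty}\le C\|\varphi\|_{L^1(\mathbb{I};(w^2-1)^{-1/4}dw)}\|\psi\|_{L^1(\mathbb{I};(v^2-1)^{-1/4}dv)}<\infty,
\end{eqnarray*}
so $\varphi\underset{\mu}{*}\psi\in\mathbb{S}^{N,\alpha}_{r,\infty}(\mathbb{I})$. Since the right-hand side is bilinear, the same estimate also shows that the convolution depends continuously on each argument.
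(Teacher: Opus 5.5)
\textbf{There is a genuine gap.} Your route is exactly the paper's, since its proof of this statement is only ``proceed as above''. The step you both rely on is invalid.

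In \eqref{1.10}, $\eta$ is the variable of integration, not a parameter. Write $P_\eta=P^{-\mu}_{-\frac12+i\eta}$. Applying $A_u^n$ under the integral sign and using \eqref{1.7} gives
\[
A_u^nK(u,v,w)=\frac{(-1)^n}{\pi}\int_0^\infty\Big[\Big(\mu+\frac12\Big)^2+\eta^2\Big]^n\eta\sinh(\pi\eta)\Phi(\eta,\mu)P_\eta(u)P_\eta(v)P_\eta(w)\,d\eta .
\]
The factor cannot be pulled out of the $\eta$-integral. So ``holding $\eta$ fixed'' has no meaning, and your final constant $C$ is undefined. The identity is in fact false for $n\ge1$, for two reasons.
\begin{enumerate}
\item Suppose $A_u^nK=\lambda K$ with a constant $\lambda$. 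Applying $\mathfrak{MF}_\mu$ in $w$ and using the product formula gives $(-1)^n[(\mu+\frac12)^2+\eta^2]^nP_\eta(u)P_\eta(v)=\lambda P_\eta(u)P_\eta(v)$ for all $\eta>0$, which is impossible.
\item Your first display makes $\varphi\underset{\mu}{*}\psi$ an eigenfunction of $A_u^n$ for every eigenvalue $(-1)^n[(\mu+\frac12)^2+\eta^2]^n$ at once. That forces $\varphi\underset{\mu}{*}\psi\equiv0$.
\end{enumerate}
Only the $n=0$ term of your estimate is sound. Even that term rests on \eqref{3.3}, which fails, e.g., at $\mu=0$: there $K=\frac1\pi Y^{-1/2}$ is unbounded near $\{Y=0\}$.

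\textbf{The gap cannot be closed under the stated hypotheses.} The legitimate way to handle $A_u^n$ is to move it onto $\varphi$ or $\psi$ by self-adjointness, as in \eqref{3.2} and \eqref{3.4}. That needs $A_w^n\varphi$ (or $A_v^n\psi$) to exist and be controlled, which membership in $L^1(\mathbb{I};(w^2-1)^{-1/4}dw)$ does not give. In fact the bound fails for every $N\ge1$:
\begin{enumerate}
\item Take $\mu=\frac12$. Then $K=c\,[(u^2-1)(v^2-1)(w^2-1)]^{-1/4}\chi_{\{Y>0\}}$, and \eqref{3.3} does hold.
\item Let $\varphi_k,\psi_k\ge0$ be approximate identities at interior points $w_0=\cosh b$ and $v_0=\cosh a$, with bounded weighted $L^1$ norms.
\item Then $\varphi_k\underset{\mu}{*}\psi_k\to K(\cdot,v_0,w_0)$ pointwise for $u\neq\cosh(a\pm b)$. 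This limit equals $c\,[(u^2-1)(v_0^2-1)(w_0^2-1)]^{-1/4}$ on $(\cosh(a-b),\cosh(a+b))$ and $0$ beyond, so it jumps at $u=\cosh(a+b)$.
\item Note that $A_uf=((u^2-1)f')'-\big(\mu(\mu+1)+\frac{\mu^2}{u^2-1}\big)f$. Suppose $A_u(\varphi_k\underset{\mu}{*}\psi_k)$ were uniformly bounded on a compact interval around $\cosh(a+b)$. Together with the uniform bound on the functions themselves, this would make them equi-Lipschitz there, contradicting the jump.
\end{enumerate}
Hence $\|\varphi_k\underset{\mu}{*}\psi_k\|_{N,\alpha,r,\infty}\to\infty$, while the right-hand side of your estimate stays bounded. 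A closed-graph argument then shows that even the bare membership claim fails for suitable $\varphi,\psi$.

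A correct statement needs a derivative hypothesis on one factor, such as $\varphi\in\mathbb{S}^{N,0}_{s,\infty}(\mathbb{I})$ as in the paper's last theorem. Then $A_u^n$ is transferred to $\varphi$ as in \eqref{3.4}.
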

\begin{proof}
By proceeding as above required result can be obtained.
\end{proof}
\begin{theorem}
The translation operator defined as \eqref{1.12}, is continuous linear mapping from the Sobolev space $\mathbb{S}^{N,0}_{s,\infty}(\mathbb{I}) ,~0\leq s<\frac{1}{4}$ into $\mathbb{S}^{N,\alpha}_{r,p}(\mathbb{I})$, where $\alpha>0, ~r\geq 0$, $\mu>-\frac{1}{2}$ and $1\leq p<\infty$.
\end{theorem}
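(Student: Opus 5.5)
Linearity of $\varphi\mapsto{}_\mu\mathfrak{T}_{(\cdot)}\varphi$ is clear from \eqref{1.12}, so the plan is to prove a bound $\|_\mu\mathfrak{T}_{(\cdot)}\varphi\|_{N,\alpha,r,p}\leq C\,\|\varphi\|_{N,0,s,\infty}$, with $v>1$ fixed. As in the proof of the first continuity theorem of Section 3, we use \eqref{1.7}, \eqref{1.10} and the self-adjointness of $A_w$ to move the operator onto $\varphi$, exactly as in \eqref{3.2}:
\begin{eqnarray*}
A_u^n(_\mu\mathfrak{T}_u\varphi)(v)=\int\limits_1^\infty K(u,v,w)\,A_w^n\varphi(w)\,dw,\qquad n=0,1,\dots,N.
\end{eqnarray*}
This form is what we need, since the hypothesis controls $A^n_w\varphi$ rather than $\varphi$ itself.

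Next we estimate pointwise. By \eqref{4.2} with $\alpha=0$ we have $|A^n_w\varphi(w)|\leq (w^2-1)^{-s}\|\varphi\|_{N,0,s,\infty}$. Combining this with the kernel bound \eqref{3.3} gives
\begin{eqnarray*}
|A_u^n(_\mu\mathfrak{T}_u\varphi)(v)|\leq |C_\mu|\,(u^2-1)^{-\frac14}(v^2-1)^{-\frac14}\,\|\varphi\|_{N,0,s,\infty}\int\limits_1^\infty (w^2-1)^{-s-\frac14}\,dw .
\end{eqnarray*}
Raising to the $p$-th power, multiplying by $e^{-\alpha u}(u^2-1)^r$, integrating in $u$ and summing over $n\le N$ then yields
\begin{eqnarray*}
\|_\mu\mathfrak{T}_{(\cdot)}\varphi\|_{N,\alpha,r,p}\leq C\,(N+1)^{\frac1p}(v^2-1)^{-\frac14}\|\varphi\|_{N,0,s,\infty}\,I_w\,\Big(\int\limits_1^\infty e^{-\alpha u}(u^2-1)^{r-\frac p4}du\Big)^{\frac1p},
\end{eqnarray*}
where $I_w$ denotes the $w$-integral above.

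It remains to check that the two integrals converge. The $u$-integral converges at $\infty$ because $\alpha>0$. At $u=1$ it needs $r-\frac p4>-1$; this holds automatically for $p<4$, and in general it is the constraint inherited from \eqref{3.3}. If one wants the full range $r\geq0$ for all $p$, the kernel bound should be sharpened near $u=1$. One could use the explicit form \eqref{1.11}, or the representation $K=\frac1\pi\int\eta\sinh(\pi\eta)\Phi P P P\,d\eta$ together with \eqref{1.3}--\eqref{1.4}, which gives the behaviour $(u-1)^{\mu/2}$ as $u\to1^+$; this is harmless since $\mu>-\frac12$.

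The \emph{main obstacle} is the $w$-integral $\int_1^\infty (w^2-1)^{-s-\frac14}dw$. Near $w=1$ it converges because $s<\frac14$, but at infinity it diverges, since $2s+\frac12<1$. So the crude bound \eqref{3.3} is not enough. The fix I would try first is to use the support property of $K$. Because $K(u,v,w)=0$ unless $Y(u,v,w)>0$, the variable $w$ is confined to the interval $uv-\sqrt{(u^2-1)(v^2-1)}<w<uv+\sqrt{(u^2-1)(v^2-1)}$, whose upper endpoint is at most $2uv$. The $w$-integral is therefore at most a constant times $(2uv)^{\frac12-2s}$. This polynomial growth in $u$ is absorbed by $e^{-\alpha u}$ in the $u$-integral, using $\alpha>0$. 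With this, all constants depend only on $\mu,v,s,r,\alpha,p,N$, and the required continuity follows.
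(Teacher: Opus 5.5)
The gap is the pointwise bound \eqref{3.3}, on which your whole estimate rests.

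By \eqref{1.11}, for fixed $u,v$ one has $Y=R^2-(w-uv)^2$ with $R^2=(u^2-1)(v^2-1)$. So $Y$ vanishes linearly as $w\uparrow uv+R$, while $w^2-1$ stays bounded away from $0$ and $\infty$ there. Hence $K(u,v,w)\to\infty$ at that endpoint whenever $\mu<\frac12$ (for $\mu=0$ one simply has $K=\pi^{-1}Y^{-1/2}$), and no bound of the form \eqref{3.3} can hold.

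So at best your argument covers $\mu\ge\frac12$, and, as you note yourself, only $r>\frac p4-1$ unless the behaviour at $u=1$ is sharpened. It says nothing about $-\frac12<\mu<\frac12$. In particular it misses the range $\mu=-2s\in(-\frac12,0]$, which is exactly the case the paper's proof handles. Your observation that $\int_1^\infty(w^2-1)^{-s-\frac14}dw$ diverges is correct, and so is the truncation $w\le 2uv$, but neither touches this problem.

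The sharpening at $u=1$ is only asserted. The spectral representation cannot supply it: after \eqref{1.3} the weight $\eta\sinh(\pi\eta)\Phi(\eta,\mu)$ grows like $\eta^{2\mu+1}$, so no finite pointwise bound comes out.

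The missing idea, which is what the paper does after \eqref{4.3}, is to avoid pointwise bounds on $K$ altogether. Since $K\ge0$, you can integrate it exactly against the majorant of $|A^n_w\varphi|$. Taking $s=-\frac{\mu}{2}$, you have $|A^n_w\varphi(w)|\le(w^2-1)^{\mu/2}\|\varphi\|_{N,0,s,\infty}$. The identity \eqref{4.4} (whose factor $(-1)^{\mu/2}$ should read $1$) is just the Beta integral $\int_{-R}^{R}(R^2-t^2)^{\mu-\frac12}dt=R^{2\mu}B(\frac12,\mu+\frac12)$ inserted into \eqref{1.11}. It gives $\int_1^\infty K(u,v,w)(w^2-1)^{\mu/2}dw=\frac{[(u^2-1)(v^2-1)]^{\mu/2}}{2^\mu\Gamma(\mu+1)}$. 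Hence $|A^n_u({}_\mu\mathfrak{T}_u\varphi)(v)|\le C(u^2-1)^{\mu/2}(v^2-1)^{\mu/2}\|\varphi\|_{N,0,s,\infty}$ for all $u$ at once. This settles both the large-$w$ divergence and the behaviour at $u=1$. The price is that $s$ is tied to $\mu$, whereas your route, wherever \eqref{3.3} is available, would decouple them.

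Finally, your remark that the factor $(u-1)^{\mu/2}$ is harmless because $\mu>-\frac12$ is wrong for $\mu<0$. Integrability at $u=1$ requires $r+\frac{p\mu}{2}>-1$. Take $\varphi$ equal to $(w^2-1)^{\mu/2}$ near $w=v$ and smoothly cut off. The identity above then gives $({}_\mu\mathfrak{T}_u\varphi)(v)=\frac{[(u^2-1)(v^2-1)]^{\mu/2}}{2^\mu\Gamma(\mu+1)}$ for $u$ near $1$, so the conclusion genuinely fails when $r+\frac{p\mu}{2}\le-1$. The paper's proof makes the same overstatement.
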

\begin{proof}
Using \eqref{3.2}, we have
\begin{eqnarray}
A_u^n(_\mu\mathfrak{T}_u\varphi(v))=
\int\limits_1^\infty K(u,v,w)A_w^n\varphi(w)dw. \label{4.3}
\end{eqnarray}
From \cite[p. 41]{AP2}, considering $\mu$ to be purely real  then for  $\mu>\frac{-1}{2}$ , we have
\begin{eqnarray}
\int\limits_1^\infty K(u,v,w)(w^2-1)^{\frac{\mu}{2}}dw=\frac{(-1)^{\frac{\mu}{2}}}{2^{\mu} \Gamma(1+\mu)}(u^2-1)^{\frac{\mu}{2}}(v^2-1)^{\frac{\mu}{2}}.\label{4.4}
\end{eqnarray}
In view of \eqref{4.1}, \eqref{4.3} and \eqref{4.4}, for some constant $C^\prime>0$, we can write
\begin{eqnarray*}
\|_\mu\mathfrak{T}_{(\cdot)}\varphi\|_{N,\alpha,r,p} \leq ~C^\prime~\|\varphi\|_{N,0,\frac{-\mu}{2},\infty}~(v^2-1)^{\frac{\mu }{2}} \Bigg(\int\limits_1^\infty e^{-\alpha u}(u^2-1)^{r+\frac{\mu p}{2}}du\Bigg)^{\frac{1}{p}}.
\end{eqnarray*}
The involved integral in the RHS is convergent for $\alpha>0,~\mu>\frac{-1}{2}, ~r\geq 0,$ and $ 1\leq p<\infty$. Thus, for some fixed $v>1$, some constant $C>0$ and $0\leq s(=\frac{-\mu}{2})<\frac{1}{4}$, we get
\begin{eqnarray*}
\| _\mu\mathfrak{T}_{(\cdot)}\varphi\|_{N,\alpha,r,p} \leq ~C~\|\varphi\|_{N,0,s,\infty} .
\end{eqnarray*}
Hence, the theorem is proved.
\end{proof}
\begin{theorem}
If $\varphi \in \mathbb{S}^{N,0}_{s,\infty}(\mathbb{I}), ~0\leq s<\frac{1}{4} $ and $\psi\in\mathbb{S}^{0,0}_{\frac{\mu}{2},1}(\mathbb{I}), ~\mu>\frac{-1}{2}$, then the convolution operator defined as \eqref{1.13}, belongs to the Sobolev space $\mathbb{S}^{N,\alpha}_{r,p}(\mathbb{I})$, where $\alpha>0, ~r\geq 0$ and $1\leq p<\infty$.
\end{theorem}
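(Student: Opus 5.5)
The plan is to follow the pattern of the preceding theorem on the translation operator from $\mathbb{S}^{N,0}_{s,\infty}(\mathbb{I})$ into $\mathbb{S}^{N,\alpha}_{r,p}(\mathbb{I})$. The convolution will be treated as an average of translates of $\varphi$ against $\psi$:
\begin{eqnarray*}
(\varphi\underset{\mu}{*}\psi)(u)=\int_1^\infty \psi(v)\,({}_\mu\mathfrak{T}_u\varphi)(v)\,dv .
\end{eqnarray*}
I will apply $A_u^n$, $0\le n\le N$, and move it inside the $v$-integral. Then, exactly as in \eqref{3.2} (equivalently \eqref{3.4} with $p=n$, $q=0$), I will use the self-adjointness of $A_w$ together with \eqref{1.7} and \eqref{1.10} to get
\begin{eqnarray*}
A_u^n(\varphi\underset{\mu}{*}\psi)(u)=\int_1^\infty\int_1^\infty K(u,v,w)\,A_w^n\varphi(w)\,\psi(v)\,dv\,dw .
\end{eqnarray*}
The point is that all derivatives fall on $\varphi$, which is where the Sobolev regularity of order $N$ is assumed, while $\psi$ only needs to lie in the order-zero space.

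Next I will bound the inner $w$-integral. Since $\varphi\in\mathbb{S}^{N,0}_{s,\infty}$ with $s=-\mu/2$ (the same identification as in the previous proof), I have $|A_w^n\varphi(w)|\le \|\varphi\|_{N,0,s,\infty}(w^2-1)^{\mu/2}$. Because $K\ge 0$, the identity \eqref{4.4} then gives
\begin{eqnarray*}
\Big|\int_1^\infty K(u,v,w)A_w^n\varphi(w)\,dw\Big|\le C_\mu\,\|\varphi\|_{N,0,s,\infty}\,(u^2-1)^{\frac{\mu}{2}}(v^2-1)^{\frac{\mu}{2}} .
\end{eqnarray*}
Integrating this against $|\psi(v)|$ produces exactly $\int_1^\infty|\psi(v)|(v^2-1)^{\mu/2}dv=\|\psi\|_{0,0,\frac{\mu}{2},1}$. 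This is why the hypothesis on $\psi$ is $\psi\in\mathbb{S}^{0,0}_{\frac{\mu}{2},1}(\mathbb{I})$. The result is the pointwise bound
\begin{eqnarray*}
|A_u^n(\varphi\underset{\mu}{*}\psi)(u)|\le C_\mu\,\|\varphi\|_{N,0,s,\infty}\,\|\psi\|_{0,0,\frac{\mu}{2},1}\,(u^2-1)^{\frac{\mu}{2}} .
\end{eqnarray*}

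Finally I will raise this bound to the $p$-th power, multiply by $e^{-\alpha u}(u^2-1)^r$, integrate over $\mathbb{I}$, and sum over $n=0,\dots,N$. This gives
\begin{eqnarray*}
\|\varphi\underset{\mu}{*}\psi\|_{N,\alpha,r,p}\le C\,\|\varphi\|_{N,0,s,\infty}\,\|\psi\|_{0,0,\frac{\mu}{2},1}\Big(\int_1^\infty e^{-\alpha u}(u^2-1)^{r+\frac{\mu p}{2}}du\Big)^{\frac1p}.
\end{eqnarray*}
The last integral converges for $\alpha>0$, $r\ge0$, $\mu>-\tfrac12$ and $1\le p<\infty$. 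Convergence at infinity comes from the exponential factor. Convergence near $u=1$ holds because the exponent $r+\mu p/2$ is greater than $-1$ whenever $\mu p/2>-1$; this is automatic for $p\le 2$, and for larger $p$ I would use $r$, or restrict $\mu\ge0$ as the previous proof implicitly does.

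The main obstacle is the interchange step, that is, justifying the passage of $A_u^n$ through the double integral and onto $\varphi$ via self-adjointness. That requires boundary terms to vanish and Fubini to apply. I would handle it as in \eqref{3.2}, invoking that computation directly rather than redoing it.
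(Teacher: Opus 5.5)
Your argument follows the paper's proof step for step. You move $A_u^n$ onto $\varphi$ as in \eqref{3.2}--\eqref{3.4} and use the weighted-sup bound $|A_w^n\varphi(w)|\le\|\varphi\|_{N,0,s,\infty}(w^2-1)^{\mu/2}$ with $s=-\mu/2$. Like the paper, you add this identification as an extra hypothesis, since the statement does not tie $s$ to $\mu$. You then apply \eqref{4.4} using $K\ge0$, integrate against $|\psi|$, and finish with $\int_1^\infty e^{-\alpha u}(u^2-1)^{r+\mu p/2}\,du$.

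The one point you leave open, convergence of that integral at $u=1$, is a genuine gap. The paper shares it: it simply asserts convergence for all $r\ge0$ and $1\le p<\infty$.

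\begin{itemize}
\item The integral is finite iff $r+\frac{\mu p}{2}>-1$. Since $\mu>-\frac12$, this is automatic for $p\le4$, not merely $p\le2$.
\item Your fallback of restricting to $\mu\ge0$ is empty. The condition $s=-\mu/2\in[0,\frac14)$ means exactly $-\frac12<\mu\le0$, so it would leave only $\mu=0$. The previous proof does not implicitly assume $\mu\ge0$; it forces $\mu\le0$.
\item No sharper argument can remove the condition, because the statement itself fails when $r+\frac{\mu p}{2}\le-1$.
\end{itemize}

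Here is a counterexample. Take $-\frac12<\mu<0$ and $\varphi(w)=(w^2-1)^{\mu/2}$. By the factorization \eqref{1.6}, $A_w\varphi=0$. Hence $\varphi\in\mathbb{S}^{N,0}_{-\mu/2,\infty}(\mathbb{I})$ for every $N$, with norm $1$ in the weighted-sup sense you use. Take any positive $\psi\in\mathbb{S}^{0,0}_{\frac{\mu}{2},1}(\mathbb{I})$, for example $\psi(v)=e^{-v}$. Tonelli and \eqref{4.4} then give $(\varphi\underset{\mu}{*}\psi)(u)=c\,(u^2-1)^{\mu/2}$ with a constant $c>0$. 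The left side of \eqref{4.4} is positive, so the factor $(-1)^{\mu/2}$ printed there cannot be right, but the shape $(u^2-1)^{\mu/2}(v^2-1)^{\mu/2}$ is. Consequently the $n=0$ term of $\|\varphi\underset{\mu}{*}\psi\|^p_{N,\alpha,r,p}$ equals $c^p\int_1^\infty e^{-\alpha u}(u^2-1)^{r+\mu p/2}\,du=\infty$. This happens, for instance, when $\mu=-\frac25$, $r=0$, $p\ge5$.

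So the theorem needs the additional hypothesis $r+\frac{\mu p}{2}>-1$, which is the option you described as using $r$. With that hypothesis (and $s=-\mu/2$), your argument is complete.
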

\begin{proof}
 In view of \eqref{1.13}, \eqref{1.7}, \eqref{1.10} and the self adjoint property of the involved differential operator, we can write
 \begin{eqnarray*}
 A_u^n(\varphi\underset{\mu}{*} \psi(u))
&=&\int\limits_1^\infty\int\limits_1^\infty A_w^nK(u,v,w)\varphi(w)\psi(v)dvdw
\\
&=& \int\limits_1^\infty\int\limits_1^\infty K(u,v,w)\psi(v)A_w^n\varphi(w)dvdw.
\end{eqnarray*}
Invoking \eqref{4.1} and \eqref{4.4}, we get
\begin{eqnarray*}
&&\|\varphi\underset{\mu}{*} \psi\|_{N,\alpha,r,p}
\\
&=& \|\varphi\|_{N,0,\frac{-\mu}{2},\infty} \Bigg(\int\limits_1^\infty\Bigg[ \int\limits_1^\infty\Bigg[\int\limits_1^\infty K(u,v,w)(w^2-1)^{\frac{\mu}{2}}dw\Bigg]|\psi(v)|dv\Bigg]^p 
\\
&\times& e^{-\alpha u}(u^2-1)^{r}du\Bigg)^{\frac{1}{p}}
\\
&\leq& C^\prime~\|\varphi\|_{N,0,\frac{-\mu}{2},\infty} \Bigg(\int\limits_1^\infty\Bigg[ \int\limits_1^\infty (v^2-1)^{\frac{\mu}{2}}(u^2-1)^{\frac{\mu}{2}}|\psi(v)|dv\Bigg]^p 
 e^{-\alpha u}(u^2-1)^{r}du\Bigg)^{\frac{1}{p}}
 \\
&\leq& C^\prime~\|\varphi\|_{N,0,\frac{-\mu}{2},\infty}\|\psi\|_{0,0,\frac{\mu}{2},1} \Bigg(\int\limits_1^\infty (u^2-1)^{r+\frac{p\mu}{2}} 
 e^{-\alpha u}du\Bigg)^{\frac{1}{p}},
\end{eqnarray*}
where $C^\prime$ is a positive constant. The involved integral in the right-hand side is convergent for $\alpha>0,~\mu>\frac{-1}{2}, ~r\geq 0,$ and $ 1\leq p<\infty$. Thus, for some constant $C>0$ and $0\leq s(=\frac{-\mu}{2})<\frac{1}{4}$, we get
\begin{eqnarray*}
\|\varphi\underset{\mu}{*} \psi\|_{N,\alpha,r,p}&\leq& C^\prime~\|\varphi\|_{N,0,\frac{-\mu}{2},\infty}\|\psi\|_{0,0,\frac{\mu}{2},1} <\infty.
\end{eqnarray*}
\end{proof}
\textbf{Authors' contributions:}\\ 
U. K. Mandal: Methodology, Validation, Investigation, Conceptualization. \\
Akhilesh Prasad: Supervision, Methodology, Visualization, Review and editing. \\
Sudhanshu Ranjan: Writing-original draft, Formal analysis, Software.\\
\textbf { Declarations:}\\
\textbf{Competing interests:} The authors declare that they have no competing interests.\\
\textbf {Availability of data and materials:} The manuscript has no associated data.\\
\textbf{Acknowledgment:} The second and third authors of the work were supported by the National Board of Higher Mathematics, Gov. of India under sanction no. 02011/7/2021 NBHM (R.P)/RD-II/8164.\\ 
{\bf ORCID:}\\
U. K. Mandal. http://orcid.org/0000-0001-5917-680X\\
Akhilesh Prasad. https://orcid.org/0000-0003-4715-5774\\
\bibliographystyle{amsplain}

\end{document}